\theoremstyle{plain}
\newtheorem{theorem}{Theorem}
\newtheorem{proposition}[theorem]{Proposition}
\newtheorem{lemma}[theorem]{Lemma}
\newtheorem{corollary}[theorem]{Corollary}
\theoremstyle{definition}
\newtheorem{definition}[theorem]{Definition}
\newtheorem{remark}[theorem]{Remark}
\newtheorem{remark*}{Remark}
\newcommand{\Irr}{\mathrm{Irr}}
\renewcommand{\H}{\mathfrak H}
\newcommand{\h}{\mathfrak h}
\renewcommand{\a}{\mathfrak a}
\renewcommand{\b}{\mathfrak b}
\renewcommand{\c}{\mathfrak c}
\newcommand{\hx}{\mathfrak h}
\newcommand{\ax}{\mathfrak a}
\newcommand{\bx}{\mathfrak b}
\newcommand{\cx}{\mathfrak c}
\newcommand{\dx}{\mathfrak d}
\newcommand{\kx}{\mathfrak k}
\newcommand{\sq}{\mathfrak k}
\numberwithin{equation}{section}
\numberwithin{theorem}{section}
\begin{document}
\title[Character and class parameters from character values]{Character and class parameters from entries of character tables of symmetric groups}
\address{Center for Communications Research, Princeton}
\author[A.\ R.\ Miller]{Alexander Rossi Miller}
\maketitle
\begin{abstract}
  If all of the entries of a large $S_n$ character table are covered up 
  and you are allowed to uncover one entry at a time, then how can
  you quickly identify all of the indexing characters and conjugacy classes?
  We present a fast algorithmic
  solution that works even when $n$ is so large that 
  almost none of the entries of the character table can be computed. The fraction of the character table that needs to be uncovered has exponential decay, and for many of these entries we are only interested in whether the entry is zero. 
\end{abstract}

\thispagestyle{empty}
\section{Introduction}
\noindent
In \cite{ChowPaulhus}, T.\ Y.\ Chow and J.\ Paulhus
raised the following question. 
How can you~quickly identify an irreducible character $\chi$ of $S_n$
by querying its values $\chi(g)$ in order to determine
the partition $\lambda$ such that $\chi=\chi_\lambda$?
We call this the character identification problem.
They were interested in the number of queries required to identify $\chi$,
and they gave an algorithmic solution
where the number of queries is polynomial~in~$n$.

In \S3 of this note we give a different solution
to the character identification~problem. We show that any irreducible character of $S_n$ can be identified from at most $n$ values.
We introduce, for any character $\chi$ of $S_n$, a symbol.
If $\chi$ is irreducible, then
the symbol is the Frobenius symbol of a partition $\lambda$ and $\chi=\chi_\lambda$.
If the symbol is not a Frobenius symbol of a partition of $n$, then the character is not irreducible.
The symbol is inexpensive to compute
and relies on only a small number of values,
each of which can be computed quickly even for very large $n$ if $\chi$ is irreducible.

In \S5 we consider  
what might at first seem to
be a much more difficult problem.
Let $\chi_1,\chi_2,\ldots,\chi_{p_n}$ and $g_1^{S_n},g_2^{S_n},\ldots,g_{p_n}^{S_n}$ be the irreducible characters and conjugacy classes of $S_n$, written in some order,
and let $T$ be the character table of $S_n$ 
with respect to these orderings, so $T=[\chi_i(g_j)]_{1\leq i,j\leq p_n}$. 
If all of the entries of $T$ are covered up and 
you are allowed to uncover one entry at a time, then how do you quickly identify the characters $\chi_i$ and classes $g_j^{S_n}$?
We give an algorithmic solution to this problem, and 
in particular, we show that if $u(n)$ denotes the fraction of the character table that needs to be uncovered in order to identify all of the indexing characters and classes, then $u(n)$ has exponential decay as $n\to\infty$. 

\section{Notation}
\subsection{\!\!\!}
By a partition $\lambda$ of a positive integer $n$, we shall  
 mean an integer sequence 
 $(\lambda_1,\lambda_2,\ldots,\lambda_{\ell(\lambda)})$ such that $\lambda_1\geq \lambda_2\geq \ldots\geq \lambda_{\ell(\lambda)}>0$ and $\lambda_1+\lambda_2+\ldots+\lambda_{\ell(\lambda)}=n$.
 Associated to $\lambda$ is the usual partition $\lambda'$ with 
 parts $\lambda'_i=|\{k: \lambda_k\geq i\}|$, $1\leq i\leq \lambda_1$. 
 The number of partitions of $n$ is denoted~$p_n$ and asymptotic to
 $\frac{1}{4n\sqrt{3}}\exp(\frac{2\pi}{\sqrt{6}}\sqrt{n})$.

\subsection{\!\!\!}\label{abc notation}
Identifying a partition $\lambda$ with its diagram of boxes $b$, we also 
identify a box $b$ with its location $(i,j)$.
Let $\sq(\lambda)=\max(\{i: (i,i)\in\lambda\}\cup\{0\})$.
Associated to each box $b=(i,j)\in \lambda$ is 
the content $c(b)=j-i$, the hook
\[\H_{(i,j)}(\lambda)=\{(i,u): j\leq u\leq \lambda_i\}\cup\{(v,j): i\leq v\leq  \lambda'_j\},\]
the length of the hook $\h_{(i,j)}(\lambda)=|\H_{(i,j)}(\lambda)|$, and the partition
\[\lambda(i,j)=\{(u,v)\in \lambda : u\geq i \text{ and } v\geq j\}.\]
The principal hooks and their lengths are denoted
\[\H_i(\lambda)=\H_{(i,i)}(\lambda),\quad 
  {\h_i(\lambda)=\h_{(i,i)}(\lambda)},\quad i=1,2,\ldots,\sq(\lambda).\]
Let $\a_i(\lambda)=\lambda_i-i$ and $\b_i(\lambda)=\lambda'_i-i$, so 
$\a_i(\lambda)+\b_i(\lambda)+1=\h_i(\lambda)$, and let
\[\c_i(\lambda)=\sum_{b\in\lambda(i,i)} c(b).\]
Let $\a(\lambda)=(\a_1(\lambda),\ldots,\a_{\sq(\lambda)})$,
$\b(\lambda)=(\b_1(\lambda),\ldots,\b_{\sq(\lambda)})$,  
$\c(\lambda)=(\c_1(\lambda),\ldots,\c_{\sq(\lambda)})$,
and $\h(\lambda)=(\h_1(\lambda),\ldots,\h_{\sq(\lambda)})$.

\subsection{\!\!\!}
Given two integer sequences $a=(a_1,a_2,\ldots,a_k)$ and
$b=(b_1,b_2,\ldots,b_k)$, we
write $(a\mid b)=(a_1,a_2,\ldots,a_k\mid b_1,b_2,\ldots,b_k)$.
If
\begin{equation}\label{frob cond}
  a_1>a_2>\ldots>a_k\geq 0,\quad b_1>b_2>\ldots>b_k\geq 0,
\end{equation}
then $(a\mid b)$ is Frobenius's notation, called the Frobenius symbol, for 
the unique partition $\lambda$ of
 $\sum_{i=1}^k (a_i+b_i+1)$ 
such that $\lambda_i=a_i+i$ and $\lambda'_i=b_i+i$ for $1\leq i\leq k$.
If \eqref{frob cond} does not hold, then $(a\mid b)$ is
not the Frobenius symbol of a partition.

\subsection{\!\!\!}
If $\lambda$ and $\mu$ are partitions of  $n>0$, then $\chi_\lambda$ denotes the irreducible character of $S_n$ associated with $\lambda$ in the usual way, and $\chi_\lambda(\mu)$ denotes the value $\chi_\lambda(g)$ for any $g$ with cycle type $\mu$, meaning the parts of $\mu$ are the periods of the various cycles of~$g$.

\subsection{\!\!\!}
In \S\ref{char param} we use square brackets for partitions in order to emphasize that the following convention is in play. 
If $\nu=(\nu_1,\nu_2,\ldots,\nu_l)$ is a partition of $k\leq n$, then by 
$\chi([\nu_1,\nu_2,\ldots,\nu_l])$ we mean
$\chi((\nu_1,\nu_2,\ldots,\nu_l,1,1,\ldots,1))$ with $n-k$ appended $1$'s.

\section{Identifying characters}\label{char param}
In Definition~\ref{shape definition} we associate to any character $\chi$ 
a symbol $(\ax(\chi)\mid\bx(\chi))$, and in Theorem~\ref{id theorem}
we show that if $\chi\in\Irr(S_n)$, then the symbol is the Frobenius symbol of the partition $\lambda$ such that $\chi=\chi_\lambda$.
Important for us will be the result of Frobenius \cite[Eq.\ (16)]{Frobenius} that 
for any $\chi_\lambda\in\Irr(S_n)$, 
\begin{equation}\label{content sum}
  \c_1(\lambda)=\frac{\binom{n}{2}\chi_{\lambda}((1\, 2))}{\chi_\lambda(1)},
\end{equation}
where we take $\chi_{\lambda}((1\, 2))=0$ if $n=1$. 
See \cite[Thm.\ 3.1]{MillerMathAnn} for a more general result.

\begin{definition}\label{hd def}
  For any character $\chi$ of $S_n$, we define sequences
  \[{\hx(\chi)=(\hx_1(\chi),\hx_2(\chi),\ldots,\hx_{\kx(\chi)}(\chi))},\quad 
  {\dx(\chi)=(\dx_1(\chi),\dx_2(\chi),\ldots,\dx_{\kx(\chi)+1}(\chi))}\]
  recursively as follows.
  \begin{enumerate}[1)]
  \item $\dx_1(\chi)=\chi(1)$ and $\dx_2(\chi)=\chi([\hx_1(\chi)])$, where 
    $\hx_1(\chi)$ is the first element in the sequence
    $n_1:=n,n_1-1,\ldots,1$ such that $\chi([\hx_1(\chi)])\neq 0$.
  \item Given $\hx_1(\chi),\hx_2(\chi),\ldots,\hx_{u-1}(\chi)$ and
    $\dx_1(\chi),\dx_2(\chi),\ldots,\dx_u(\chi)$ with $u\geq 2$,
    \begin{enumerate}[a)]
    \item if $\sum_{i=1}^{u-1} \hx_i(\chi)=n$ or $\hx_{u-1}(\chi)\leq 2$,
      then $\kx(\chi)=u-1$;
    \item if $\sum_{i=1}^{u-1} \hx_i(\chi)<n$ and $\hx_{u-1}(\chi)>2$,
      then
      ${n_u=
      \min\{\hx_{u-1}(\chi)-2,n_{u-1}-\hx_{u-1}(\chi)\}}$,
      $\hx_u(\chi)$ is the first element of 
      $n_u,n_u-1,\ldots, 1$ such that 
      ${\chi([\hx_1(\chi),\hx_2(\chi),\ldots, \hx_u(\chi)])\neq 0}$, 
      and
      $\dx_{u+1}(\chi)=\chi([\hx_1(\chi),\hx_2(\chi),\ldots, \hx_u(\chi)])$.
    \end{enumerate}
  \end{enumerate}
\end{definition}

\begin{lemma}\label{hook lemma}
  If $\chi=\chi_\lambda\in\Irr(S_n)$, then
    \[\kx(\chi)=\sq(\lambda),\quad \hx_i(\chi)=\h_i(\lambda),\quad
    \dx_i(\chi)=\pm\chi_{\lambda(i,i)}(1),\quad
    i=1,2,\ldots,\kx(\chi).\]
\end{lemma}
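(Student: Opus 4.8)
The plan is to prove the three assertions simultaneously by induction on $\kx(\chi)$, using the \emph{hook removal} formula for character degrees. The key tool is that for a partition $\lambda$ with $\sq(\lambda)\geq 1$, removing the first principal hook $\H_1(\lambda)$ (of length $\h_1(\lambda)$) yields the partition $\lambda(2,2)$, and the degree $\chi_{\lambda(2,2)}(1)$ of the resulting character of $S_{n-\h_1(\lambda)}$ is related to $\chi_\lambda(1)$ via the Murnaghan--Nakayama rule: evaluating $\chi_\lambda$ on a class with a part equal to $\h_1(\lambda)$ picks out exactly the hooks of that length, and the principal hook $\H_1(\lambda)$ is the \emph{longest} hook of $\lambda$. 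More precisely, I would first establish the base-of-recursion statement: $\h_1(\lambda)$ is the largest integer $m$ with $1\leq m\leq n$ such that $\chi_\lambda([m])=\chi_\lambda((m,1^{n-m}))\neq 0$, and for that $m$ one has $\chi_\lambda((m,1^{n-m}))=\pm\chi_{\lambda(2,2)}(1)$ where the partition $\lambda(2,2)$ is obtained from $\lambda$ by deleting $\H_1(\lambda)$.

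First I would recall (or cite, via the Murnaghan--Nakayama rule) that $\chi_\lambda((m,1^{n-m})) = \sum_{\xi} (-1)^{\mathrm{ht}(\xi)} f^{\lambda\setminus\xi}$, where $\xi$ ranges over rim hooks of $\lambda$ of size $m$ and $f^{\lambda\setminus\xi}$ is the number of standard tableaux of the remaining skew shape; when we then further evaluate on $1^{n-m}$ we get $\pm\chi_\mu(1)$ summed over the partitions $\mu$ obtainable by removing a size-$m$ rim hook. The point is that a rim hook of size $m$ corresponds to a box $b\in\lambda$ with $\h_b(\lambda)=m$, so $\chi_\lambda([m])\neq 0$ forces $\lambda$ to have a box of hook length $m$; conversely, for $m=\h_1(\lambda)$ the \emph{only} box of hook length $\h_1(\lambda)$ is $(1,1)$ (since $\h_1(\lambda)=\a_1+\b_1+1=\lambda_1+\lambda'_1-1$ strictly exceeds $\h_b$ for every $b\neq(1,1)$), so the sum has a single term and $\chi_\lambda([\h_1(\lambda)])=\pm f^{\lambda(2,2)}=\pm\chi_{\lambda(2,2)}(1)\neq 0$. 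And for $m>\h_1(\lambda)$ there is no box of hook length $m$, so $\chi_\lambda([m])=0$. This identifies $\hx_1(\chi)=\h_1(\lambda)$ and $\dx_2(\chi)=\pm\chi_{\lambda(2,2)}(1)$, while $\dx_1(\chi)=\chi(1)=\pm\chi_{\lambda(1,1)}(1)$ trivially since $\lambda(1,1)=\lambda$.

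Next I would run the induction. Set $\lambda^{(1)}=\lambda$ and $\lambda^{(u)}=\lambda(u,u)$, so $\lambda^{(u)}$ is a partition of $n-\sum_{i<u}\h_i(\lambda)$ with $\sq(\lambda^{(u)})=\sq(\lambda)-u+1$ and principal hooks $\h_j(\lambda^{(u)})=\h_{j+u-1}(\lambda)$. The convention of \S2.5 is exactly set up so that $\chi_\lambda([\h_1(\lambda),\dots,\h_{u-1}(\lambda),m])$, after peeling off the first $u-1$ principal hooks by repeated Murnaghan--Nakayama (each step forced, by the base case applied to $\lambda^{(1)},\dots,\lambda^{(u-1)}$, since $\h_1(\lambda)>\h_2(\lambda)>\cdots$), equals $\pm\chi_{\lambda^{(u)}}([m])$ evaluated inside $S$ of the appropriate size. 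Here I must check that the bound $n_u=\min\{\hx_{u-1}(\chi)-2,\,n_{u-1}-\hx_{u-1}(\chi)\}$ in Definition~\ref{hd def}(2b) is $\geq\h_u(\lambda)=\h_1(\lambda^{(u)})$ whenever $\sq(\lambda)\geq u$: the inequality $\h_1(\lambda^{(u)})\leq\h_1(\lambda^{(u-1)})-2$ holds because deleting a principal hook drops the arm and leg each by at least $1$; and $\h_1(\lambda^{(u)})\leq n_{u-1}-\h_{u-1}(\lambda)=|\lambda^{(u)}|$ is obvious. So the search range in part 2b is wide enough to reach $\h_u(\lambda)$, and by the base case applied to $\lambda^{(u)}$ it stops exactly there, giving $\hx_u(\chi)=\h_u(\lambda)$ and $\dx_{u+1}(\chi)=\pm\chi_{\lambda^{(u+1)}}(1)$. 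Finally I would check the termination clause 2a matches: $\sum_{i\leq u-1}\h_i(\lambda)=n$ exactly when $\lambda^{(u)}$ is empty, i.e. $\sq(\lambda)=u-1$; and $\h_{u-1}(\lambda)\leq 2$ forces $\lambda^{(u)}$ to be empty or a single box (whose principal-hook data is degenerate), again $\sq(\lambda)=u-1$ — so $\kx(\chi)=\sq(\lambda)$ in both cases.

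The main obstacle I anticipate is the bookkeeping in the repeated Murnaghan--Nakayama peeling: one must verify that at each stage the \emph{largest} hook length present in $\lambda^{(u)}$ is precisely $\h_1(\lambda^{(u)})=\h_u(\lambda)$ with multiplicity one as a box hook length, so that each evaluation $\chi_\lambda([\h_1,\dots,\h_{u-1},m])$ with $m$ at its maximal nonzero value factors through a \emph{single} rim-hook removal and cleanly reduces to $\chi_{\lambda^{(u)}}$. The sign $\pm$ never needs to be pinned down — only nonvanishing matters for the recursion to select the right hook lengths, and the statement of the lemma only claims $\dx_i(\chi)=\pm\chi_{\lambda(i,i)}(1)$. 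Everything else is a routine verification that the numerical bounds in Definition~\ref{hd def} are compatible with the geometry of principal hooks.
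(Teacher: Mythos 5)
Your argument is essentially the paper's proof written out in full: the paper disposes of the lemma in one sentence by citing the Murnaghan--Nakayama rule together with exactly the two facts you establish along the way, namely that $\H_{(1,1)}(\nu)$ is strictly the longest hook of any nonempty $\nu$ (so the rim hook of size $\h_1(\nu)$ is unique, its removal leaves $\nu(2,2)$, and the value $\chi_\nu([m])$ vanishes for $m>\h_1(\nu)$), and that $\h_1(\nu)\geq \h_2(\nu)+2$ (so the cap $\hx_{u-1}(\chi)-2$ in Definition~\ref{hd def} does not cut off the next principal hook). Your peeling induction on $\lambda^{(u)}=\lambda(u,u)$, the termination analysis giving $\kx(\chi)=\sq(\lambda)$, and the remark that only nonvanishing (never the sign) is needed all match the intended argument.

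The one step to flag is your claim that $n_{u-1}-\h_{u-1}(\lambda)=|\lambda^{(u)}|$ ``is obvious.'' Under the recursion as literally written, $n_{u-1}$ is itself a minimum and can be strictly smaller than the number of boxes not yet removed, so this equality can fail, and with it the bound $n_u\geq\h_u(\lambda)$ that your range check requires. For $\lambda=(3,3,3)$ one gets $\hx_1(\chi)=5$, $n_2=\min\{3,4\}=3$, $\hx_2(\chi)=3$, and then literally $n_3=\min\{1,0\}=0<1=\h_3(\lambda)$ even though $|\lambda(3,3)|=1$; similarly $\lambda=(4,4,4)$ gives $n_3=0<2=\h_3(\lambda)$. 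So your equality is valid only under the evidently intended reading of Definition~\ref{hd def}, in which the second argument of the minimum is the remaining size $n-\sum_{i=1}^{u-1}\hx_i(\chi)$; with that reading your verification is exactly right (and more careful than the paper's one-line proof, which does not engage with this bookkeeping at all), but as written you have silently replaced the stated recursion by the intended one, and that substitution should be stated explicitly rather than labeled obvious.
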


\begin{proof}
  This is by the Murnaghan--Nakayama rule, since for 
  any nonempty partition $\nu$, the hook $\H_{(1,1)}(\nu)$
  has more boxes than any other hook $\H_b(\nu)$ with $b\neq(1,1)$, and
  it has at least two more boxes than any other 
  principal hook $\H_{(j,j)}$, $2\leq j\leq\sq(\nu)$. 
\end{proof}

\begin{definition}\label{cx def}
  For any character $\chi$ of $S_n$, let
  $\cx(\chi)=(\cx_1(\chi),\cx_2(\chi),\ldots,\cx_{\kx(\chi)}(\chi))$,
  \[
    \cx_i(\chi)=
    \frac{\binom{n-\sum_{j=1}^{i-1}\hx_j(\chi)}{2}\chi([\hx_1(\chi),\hx_2(\chi),\ldots, \hx_{i-1}(\chi) ,2])}{\dx_i(\chi)},\quad i=1,2,\ldots,\kx(\chi),
  \]
  with the convention that $\cx_{\kx(\chi)}(\chi)=0$ if $\hx_{\kx(\chi)}(\chi)=1$.
\end{definition}

\begin{lemma}\label{content lemma}
If $\chi=\chi_\lambda\in\Irr(S_n)$, then $\cx(\chi)=\c(\lambda)$.
\end{lemma}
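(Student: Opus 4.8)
The plan is to express each $\cx_i(\chi)$ as an instance of Frobenius's identity \eqref{content sum} for the partition $\lambda(i,i)$. Fix $i\in\{1,2,\ldots,\kx(\chi)\}$. By Lemma~\ref{hook lemma} we already know that $\hx_j(\chi)=\h_j(\lambda)$ for all $j$ and that $\dx_i(\chi)=\pm\chi_{\lambda(i,i)}(1)$, so the only quantity left to interpret is the numerator of $\cx_i(\chi)$, that is, the value $\chi([\hx_1(\chi),\ldots,\hx_{i-1}(\chi),2])$. I would first dispose of the degenerate case $\hx_i(\chi)=\h_i(\lambda)=1$: since $\hx_i(\chi)\leq 2$, the recursion of Definition~\ref{hd def} terminates at $i$, so $i=\kx(\chi)$ and $\cx_i(\chi)=0$ by the convention in Definition~\ref{cx def}, while $\h_i(\lambda)=1$ forces $(i,i)$ to be a corner of $\lambda$, whence $\lambda(i,i)=\{(i,i)\}$ and $\c_i(\lambda)=0$ as well.

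For the main case $\h_i(\lambda)\geq 2$, set $N=n-\sum_{j<i}\h_j(\lambda)$ and regard $\lambda(i,i)$ as a genuine partition of $N$ via the diagonal shift $(u,v)\mapsto(u-i+1,v-i+1)$, which preserves the content $v-u$ of every box. The idea is to evaluate both $\chi([\hx_1(\chi),\ldots,\hx_{i-1}(\chi),2])$ and $\dx_i(\chi)=\chi([\hx_1(\chi),\ldots,\hx_{i-1}(\chi)])$ by the Murnaghan--Nakayama rule, peeling off the parts $\h_1(\lambda),\ldots,\h_{i-1}(\lambda)$ in turn. The key point, exactly the one used in the proof of Lemma~\ref{hook lemma}, is that in any nonempty partition $\nu$ the hook $\H_{(1,1)}(\nu)$ is strictly longer than every other hook, so $\nu$ has a unique border strip of length $\h_{(1,1)}(\nu)$, removing it turns $\nu$ into the partition $\nu(2,2)$, and it contributes the sign $(-1)^{\b_1(\nu)}$. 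Applying this successively to $\nu=\lambda,\lambda(2,2),\ldots,\lambda(i-1,i-1)$---at each stage the part being peeled equals the longest hook of the current partition---both Murnaghan--Nakayama expansions collapse to single terms carrying the \emph{same} sign $\varepsilon=(-1)^{\b_1(\lambda)+\cdots+\b_{i-1}(\lambda)}$, and leaving $\varepsilon\,\chi_{\lambda(i,i)}((1\,2))$ and $\varepsilon\,\chi_{\lambda(i,i)}(1)$ respectively. Substituting these into Definition~\ref{cx def} cancels $\varepsilon$ and gives $\cx_i(\chi)=\binom{N}{2}\chi_{\lambda(i,i)}((1\,2))/\chi_{\lambda(i,i)}(1)$, which is precisely $\c_1(\lambda(i,i))$ by \eqref{content sum} applied to $\chi_{\lambda(i,i)}\in\Irr(S_N)$; since the diagonal shift preserves contents, $\c_1(\lambda(i,i))=\c_i(\lambda)$, which finishes the proof.

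I expect the only real work to be the bookkeeping in the Murnaghan--Nakayama step: checking that the sequence of border strips removed when evaluating $\chi([\hx_1(\chi),\ldots,\hx_{i-1}(\chi),2])$ is literally the same as the one removed when evaluating $\dx_i(\chi)$, so that the two signs agree and cancel; that each of these removals is forced by the uniqueness just described; and that the leftover cycle type after the first $i-1$ removals is $(2,1^{N-2})$ in the one computation and $(1^{N})$ in the other. Everything else is a direct appeal to Lemma~\ref{hook lemma}, to \eqref{content sum}, or to the content-shift remark, and the inequality $N\geq\h_i(\lambda)\geq 2$ needed to make sense of $\chi_{\lambda(i,i)}((1\,2))$ is immediate because $\h_{(1,1)}(\lambda(i,i))\leq|\lambda(i,i)|$.
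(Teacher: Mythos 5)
Your proposal is correct and follows essentially the same route as the paper, whose proof is simply the citation of Lemma~\ref{hook lemma} together with Eq.~\eqref{content sum}; you have merely made explicit the underlying Murnaghan--Nakayama bookkeeping (the uniqueness of the successive longest border strips, the cancellation of the common sign between $\chi([\hx_1(\chi),\ldots,\hx_{i-1}(\chi),2])$ and $\dx_i(\chi)$, and the content-preserving shift identifying $\c_1(\lambda(i,i))$ with $\c_i(\lambda)$), plus the degenerate case $\hx_{\kx(\chi)}(\chi)=1$ covered by the convention in Definition~\ref{cx def}. No gaps; this is the intended argument, spelled out.
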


\begin{proof}
  By Lemma~\ref{hook lemma} and Eq.\ \eqref{content sum}.
\end{proof}

\begin{definition}\label{shape definition}
  For any character $\chi$ of $S_n$, we define
  $\ax(\chi)=(\ax_1(\chi),\ax_2(\chi),\ldots,\ax_{\kx(\chi)}(\chi))$
  recursively as follows.
  \begin{enumerate}[{\rm 1)}]
  \item $\ax_{\kx(\chi)}(\chi)=\frac{1}{\hx_{\kx(\chi)}(\chi)}(\cx_{\kx(\chi)}(\chi)+\binom{\hx_{\kx(\chi)}(\chi)}{2})$. \label{last arm part}
  \item For $i=\kx(\chi)-1,\kx(\chi)-2,\ldots, 1$, 
    \[
      \begin{split}
\ax_i(\chi)
        =
        \ax_{i+1}(\chi)&+\frac{1}{\hx_i(\chi)}\left[\cx_i(\chi)-\cx_{i+1}(\chi) -\binom{\ax_{i+1}(\chi)+1}{2}+\binom{\hx_{i+1}(\chi)-\ax_{i+1}(\chi)}{2}\right.\\
        &\ \ \left.+\binom{\hx_i(\chi)-\hx_{i+1}(\chi)}{2}+ (\hx_{i+1}(\chi)-\ax_{i+1}(\chi)-1)(\hx_i(\chi)-\hx_{i+1}(\chi))\right].
      \end{split}\label{ai rec eq}
    \]\label{rec part}
  \end{enumerate}
  We define $\bx(\chi)=(\bx_1(\chi),\bx_2(\chi),\ldots,\bx_{\kx(\chi)}(\chi))$, where 
  $\bx_i(\chi)=\hx_i(\chi)-\ax_i(\chi)-1$.
\end{definition}

\begin{theorem}\label{id theorem}
  If $\chi\in\Irr(S_n)$, then $(\ax(\chi)\mid \bx(\chi))$
  is the Frobenius symbol of a partition $\lambda$ and $\chi=\chi_\lambda$.
\end{theorem}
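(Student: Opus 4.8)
The plan is to reduce the theorem to a single statement about arm lengths and prove that by downward induction. By Lemmas~\ref{hook lemma} and~\ref{content lemma}, if $\chi=\chi_\lambda$ then $\kx(\chi)=\sq(\lambda)$, $\hx_i(\chi)=\h_i(\lambda)$, and $\cx_i(\chi)=\c_i(\lambda)$ for all $i$; so it suffices to show $\ax_i(\chi)=\a_i(\lambda)$ for $1\le i\le\sq(\lambda)$. Granting this, $\bx_i(\chi)=\hx_i(\chi)-\ax_i(\chi)-1=\h_i(\lambda)-\a_i(\lambda)-1=\b_i(\lambda)$ by the identity $\a_i(\lambda)+\b_i(\lambda)+1=\h_i(\lambda)$, so $(\ax(\chi)\mid\bx(\chi))=(\a(\lambda)\mid\b(\lambda))$; the sequences $\a(\lambda)$ and $\b(\lambda)$ are strictly decreasing and nonnegative because $\lambda$ is a partition with $(\sq(\lambda),\sq(\lambda))\in\lambda$, so \eqref{frob cond} holds, and since $\lambda_i=\a_i(\lambda)+i$ and $\lambda'_i=\b_i(\lambda)+i$ this symbol is the Frobenius symbol of $\lambda$, giving $\chi=\chi_\lambda$.

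The engine of the induction is the observation that $\lambda(i,i)\setminus\lambda(i+1,i+1)$ is exactly the principal hook $\H_i(\lambda)$, whose boxes have contents $0$ (the corner), $1,2,\dots,\a_i(\lambda)$ (the arm), and $-1,-2,\dots,-\b_i(\lambda)$ (the leg). Hence $\cx_i(\chi)-\cx_{i+1}(\chi)=\c_i(\lambda)-\c_{i+1}(\lambda)=\binom{\a_i(\lambda)+1}{2}-\binom{\b_i(\lambda)+1}{2}$, where $\cx_{i+1}(\chi)$ is read as $0$ when $i=\sq(\lambda)$. Writing $\b_i(\lambda)=\hx_i(\chi)-\a_i(\lambda)-1$ and expanding, the quadratic terms in $\a_i(\lambda)$ cancel, leaving the linear relation
\[
  \cx_i(\chi)-\cx_{i+1}(\chi)=\hx_i(\chi)\,\a_i(\lambda)-\binom{\hx_i(\chi)}{2}.
\]
Thus $\a_i(\lambda)$ is pinned down by $\hx_i(\chi)$ and the content increment, and Definition~\ref{shape definition} is designed precisely to extract it.

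Now I would carry out the downward induction on $i$. In the base case $i=\sq(\lambda)$ the shape $\lambda(i,i)$ is a single hook, so the first clause of Definition~\ref{shape definition} together with the displayed relation reduces to the elementary identity $\binom{a+1}{2}-\binom{b+1}{2}+\binom{a+b+1}{2}=a(a+b+1)$ with $a=\a_i(\lambda)$, $b=\b_i(\lambda)$; the degenerate subcase $\hx_{\sq(\lambda)}(\chi)=1$ is handled by the conventions $\cx_{\sq(\lambda)}(\chi)=0$ and $\a_{\sq(\lambda)}(\lambda)=0$. For the inductive step I would substitute $\ax_{i+1}(\chi)=\a_{i+1}(\lambda)$, $\hx_j(\chi)=\h_j(\lambda)$, and the displayed content relation into the recursive clause of Definition~\ref{shape definition}; after clearing $\hx_i(\chi)$ the terms involving $\a_i(\lambda)$ cancel, and what remains is an identity in $\hx_i(\chi),\hx_{i+1}(\chi),\ax_{i+1}(\chi)$ alone, which collapses via the splitting identity $\binom{p+q}{2}=\binom{p}{2}+\binom{q}{2}+pq$ (with $p=\hx_{i+1}(\chi)-\ax_{i+1}(\chi)$, $q=\hx_i(\chi)-\hx_{i+1}(\chi)$) and its companion $\binom{m}{2}=\binom{m-r}{2}+mr-\binom{r+1}{2}$. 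This gives $\ax_i(\chi)=\a_i(\lambda)$ and finishes the induction.

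The main obstacle — everything else being bookkeeping built on the two lemmas and the Murnaghan--Nakayama rule used to prove them — is this final telescoping verification in the recursive clause: one must check that the precise combination of binomial coefficients and the cross term in Definition~\ref{shape definition} is exactly what converts the already-known data $\ax_{i+1}(\chi)$, $\hx_i(\chi)$, $\hx_{i+1}(\chi)$ and the content increment into $\a_i(\lambda)$, while keeping careful track of the degenerate cases (a final principal hook of length $1$, the appended $1$'s in the bracket notation, and the conventions attached to $\cx_{\sq(\lambda)}(\chi)$).
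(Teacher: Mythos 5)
Your argument follows the same route as the paper's: reduce via Lemmas~\ref{hook lemma} and~\ref{content lemma} to showing $\ax(\chi)=\a(\lambda)$, compute the increment $\c_i(\lambda)-\c_{i+1}(\lambda)$ as the content sum over the principal hook $\H_i(\lambda)=\lambda(i,i)\smallsetminus\lambda(i+1,i+1)$, and match the result against the two clauses of Definition~\ref{shape definition}. Your intermediate identity
\[
\c_i(\lambda)-\c_{i+1}(\lambda)=\binom{\a_i(\lambda)+1}{2}-\binom{\b_i(\lambda)+1}{2}=\h_i(\lambda)\,\a_i(\lambda)-\binom{\h_i(\lambda)}{2}
\]
is correct and is a mild streamlining of the paper's equation \eqref{content equation}, which instead expands the hook sum relative to $\H_{i+1}(\lambda)$ with increments $s,t$; your version has the advantage of giving the uniform closed form $\a_i(\lambda)=\frac{1}{\h_i(\lambda)}\bigl(\c_i(\lambda)-\c_{i+1}(\lambda)+\binom{\h_i(\lambda)}{2}\bigr)$ for every $i$, not just the base case. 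The one substantive caution: you assert that the recursive clause of Definition~\ref{shape definition} ``collapses'' to give $\ax_i(\chi)=\a_i(\lambda)$, but you have not carried out that verification, and against the definition as printed it does not close up --- the bracket comes out short by $\h_i(\lambda)-\h_{i+1}(\lambda)$. For instance, for $\lambda=(2,2)$ one gets $\hx(\chi)=(3,1)$, $\cx(\chi)=(0,0)$, $\ax_2(\chi)=0$, and then clause 2) returns $\ax_1(\chi)=\tfrac13$ rather than $\a_1(\lambda)=1$. The term $\binom{\hx_i(\chi)-\hx_{i+1}(\chi)}{2}$ in clause 2) needs to be $\binom{\hx_i(\chi)-\hx_{i+1}(\chi)+1}{2}$; this is exactly what the identity $\binom{s+1}{2}-\binom{t+1}{2}=(1+\h_i(\lambda)-\h_{i+1}(\lambda))s-\binom{\h_i(\lambda)-\h_{i+1}(\lambda)+1}{2}$ displayed in the paper's own proof produces, so the printed definition appears to carry a typo, and with that correction your telescoping does work and both clauses agree with your closed form. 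The lesson is that the step you dismiss as bookkeeping is precisely where the argument can silently go wrong: write out the cancellation explicitly rather than asserting it.
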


\begin{proof}
  Suppose $\chi=\chi_\lambda\in\Irr(S_n)$. 
  The assertion is then  
  ${(\ax(\chi)\mid \bx(\chi))=(\a(\lambda)\mid \b(\lambda))}$.
  By Lemma~\ref{hook lemma}, $\hx(\chi)=\hx(\lambda)$,
  so the claim reduces to $\ax(\chi)=\a(\lambda)$.

  Let (I) and (II) denote the statements
  obtained by replacing $\chi$ by $\lambda$ everywhere in 
  parts 1) and 2) of Definition~\ref{shape definition}.
  Since $\hx(\chi)=\h(\lambda)$ and $\cx(\chi)=\c(\lambda)$ by
  Lemmas~\ref{hook lemma} and~\ref{content lemma}, we have that if (I) and (II) both hold, then  $\ax(\chi)=\a(\lambda)$.

  For (I), we have
  \[\c_{\sq(\lambda)}(\lambda)
    =\binom{\a_{\sq(\lambda)}(\lambda)+1}{2}-\binom{\h_{\sq(\lambda)}(\lambda)-\a_{\sq(\lambda)}(\lambda)}{2}
    =\a_{\sq(\lambda)}(\lambda) \h_{\sq(\lambda)}(\lambda) -\binom{\h_{\sq(\lambda)}(\lambda)}{2}.\]

  For (II), we have
    \[
    \c_i(\lambda)-\c_{i+1}(\lambda)=\sum_{b\in \H_i(\lambda)}c(b)
                 =
                   \sum_{b\in \H_{i+1}(\lambda)}c(b)
                   +\sum_{{b\in \H_i(\lambda)}\atop{c(b)>\a_{i+1}(\lambda)}}c(b)
                   +\sum_{{b\in \H_i(\lambda)}\atop{c(b)<-\b_{i+1}(\lambda)}}c(b).
  \]
  So, with $s=\a_i(\lambda)-\a_{i+1}(\lambda)$ and $t=\b_i(\lambda)-\b_{i+1}(\lambda)$, 
  \begin{equation}
    \begin{split}
      \c_i(\lambda)
      = \c_{i+1}(\lambda)
      +\binom{\a_{i+1}(\lambda)+1}{2}&-\binom{\b_{i+1}(\lambda)+1}{2}\\
      &\!\!\!\!\!\!\!\!+
      \binom{s+1}{2} +  s \a_{i+1}(\lambda)
      - \binom{t+1}{2} -t \b_{i+1}(\lambda).
    \end{split}\label{content equation}
  \end{equation}
  Now $t=\h_i(\lambda)-\h_{i+1}(\lambda)-s$, so
  \[\binom{s+1}{2}-\binom{t+1}{2}=(1+\h_i(\lambda)-\h_{i+1}(\lambda))s-\binom{\h_i(\lambda)-\h_{i+1}(\lambda)+1}{2}.\]
  Further substituting elsewhere in \eqref{content equation} the relations
  $s=\a_i(\lambda)-\a_{i+1}(\lambda)$, $t=(\h_i(\lambda)-\h_{i+1}(\lambda))-(\a_i(\lambda)-\a_{i+1}(\lambda))$, and $\b_{i+1}(\lambda)=\h_{i+1}(\lambda)-\a_{i+1}(\lambda)-1$,
  so that both sides of \eqref{content equation} are  entirely in terms of $\a_l(\lambda)$'s, $\h_l(\lambda)$'s, and $\c_l(\lambda)$'s, 
  and then solving for $\a_{i+1}(\lambda)$,
  we obtain (II).
\end{proof}

\begin{corollary}\label{non irr}
If $\chi$ is a character of $S_n$ and $(\ax(\chi)\mid\bx(\chi))$ is not the Frobenius symbol of a partition of $n$, then $\chi$ is not irreducible.
\end{corollary}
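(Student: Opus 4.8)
The plan is to prove the contrapositive: assuming $\chi$ is irreducible, I will deduce that $(\ax(\chi)\mid\bx(\chi))$ must be the Frobenius symbol of a partition of $n$. This is essentially Theorem~\ref{id theorem}, so the only genuinely new point is the qualifier ``of $n$,'' and the argument is short.

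First I would invoke Theorem~\ref{id theorem}: if $\chi\in\Irr(S_n)$, then $(\ax(\chi)\mid\bx(\chi))=(\a(\lambda)\mid\b(\lambda))$ for the partition $\lambda$ with $\chi=\chi_\lambda$, and in particular $(\ax(\chi)\mid\bx(\chi))$ is a genuine Frobenius symbol, namely that of $\lambda$. Next I would check that the partition it encodes has size exactly $n$: the size of $\lambda$ is $\sum_{i=1}^{\sq(\lambda)}(\a_i(\lambda)+\b_i(\lambda)+1)=\sum_{i=1}^{\sq(\lambda)}\h_i(\lambda)$, which equals $|\lambda|$ because the principal hooks $\H_1(\lambda),\dots,\H_{\sq(\lambda)}(\lambda)$ partition the boxes of $\lambda$; and since $\chi=\chi_\lambda$ is a character of $S_n$, we have $|\lambda|=n$. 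Hence $(\ax(\chi)\mid\bx(\chi))$ is the Frobenius symbol of a partition of $n$, and contraposing yields the corollary.

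I do not anticipate a real obstacle here; the statement is a formal consequence of Theorem~\ref{id theorem}. The one subtlety worth making explicit is that ``character of $S_n$'' forces the associated partition to have size $n$, ruling out the degenerate possibility that $(\ax(\chi)\mid\bx(\chi))$ is a bona fide Frobenius symbol but of a partition of the wrong integer.
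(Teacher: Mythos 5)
Your argument is correct and is exactly the paper's proof, which simply cites Theorem~\ref{id theorem}: the corollary is its contrapositive, and the partition $\lambda$ with $\chi=\chi_\lambda$ automatically has size $n$ since $\chi_\lambda\in\Irr(S_n)$. Your extra check via the principal-hook decomposition is fine but not needed beyond that observation.
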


\begin{proof}
By Theorem~\ref{id theorem}
  \end{proof}

\begin{proposition}\label{query prop}
  $(\ax(\chi)\mid\bx(\chi))$
  requires $\leq n$ values $\chi(g)$ for any character $\chi$ of~$S_n$.
\end{proposition}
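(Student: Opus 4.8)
The plan is to pin down exactly which values $\chi(g)$ are consulted in forming $(\ax(\chi)\mid\bx(\chi))$, to note that each is $\chi$ evaluated on a partition of $n$ (equivalently, on a conjugacy class), and to bound the number of \emph{distinct} such partitions by $n$. Suppressing $\chi$ from the notation, write $\kx=\kx(\chi)$, $\hx_u=\hx_u(\chi)$, $\dx_u=\dx_u(\chi)$, and recall $n_1=n$ and $n_u=\min\{\hx_{u-1}-2,\,n_{u-1}-\hx_{u-1}\}$ for $u\ge2$. Reading off Definitions~\ref{hd def}, \ref{cx def} and~\ref{shape definition}, the queried values are: $\chi(1)$; for each $u=1,\dots,\kx$ the \emph{hook values} $\chi([\hx_1,\dots,\hx_{u-1},m])$ with $m=n_u,n_u-1,\dots,\hx_u$; and for each $u=1,\dots,\kx$ the \emph{content value} $\chi([\hx_1,\dots,\hx_{u-1},2])$, with $u=\kx$ omitted when $\hx_\kx=1$. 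Nothing else is needed, since $\dx_1=\chi(1)$, each $\dx_{u+1}$ is the last hook value at level $u$, and the passage to $\ax(\chi),\bx(\chi)$ in Definition~\ref{shape definition} is purely arithmetic. Each displayed query is $\chi$ on the partition obtained by padding the bracketed sequence with $1$'s up to total size $n$, so it suffices to count the distinct partitions that arise.

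Next I would record two facts. By the stopping condition in part~2a) of Definition~\ref{hd def}, $\hx_u>2$ for every $u<\kx$; and since $\hx_u\le n_u\le\hx_{u-1}-2$ for $u\ge2$, the sequence $\hx_1>\hx_2>\dots>\hx_\kx$ strictly decreases with $\hx_1,\dots,\hx_{\kx-1}\ge3$. Moreover $n_u\le\hx_{u-1}-2$ for $u\ge2$, so that, after telescoping,
\[
  \sum_{u=1}^{\kx}(n_u-\hx_u+1)\;\le\;n+\sum_{u=1}^{\kx-1}\hx_u-2(\kx-1)-\sum_{u=1}^{\kx}\hx_u+\kx\;=\;n-\hx_\kx-\kx+2,
\]
which is the number of hook values queried. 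Together with $\chi(1)$ and at most $\kx$ content values this only gives a bound of about $n+2$, so the real work is to find the coincidences among the queried partitions that bring it down to $n$.

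For that I would exploit the shape ``strictly decreasing prefix $\hx_1>\dots>\hx_{u-1}$, then one more part, then $1$'s.'' Hook partitions from one level are pairwise distinct, and the facts above force hook partitions from different levels to coincide only when $\hx_\kx=1$, where the level-$\kx$ hook partition with $m=1$ equals $\chi([\hx_1,\dots,\hx_{\kx-1}])=\dx_\kx$ --- which is $\chi(1)$ if $\kx=1$ --- and so has already been queried; thus at most one hook value is redundant, and precisely when $\hx_\kx=1$. A content partition at level $u$ carries a part $2$ preceded by $\hx_1>\dots>\hx_{u-1}\ge3$; comparing the number of parts exceeding $1$ and their positions shows such a partition equals a hook partition if and only if $u=\kx$ and $\hx_\kx=2$ (when it is the level-$\kx$ hook partition with $m=2$), that distinct content partitions have distinct numbers of parts exceeding $1$, and that none is $(1^n)$. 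Assembling these, one finds the number of distinct queried partitions is at most
\[
  1+\Bigl(\sum_{u=1}^{\kx}(n_u-\hx_u+1)\Bigr)-[\hx_\kx=1]+(\kx-1)+[\hx_\kx\ge3]\;\le\;n+2-\hx_\kx-[\hx_\kx=1]+[\hx_\kx\ge3],
\]
where $[\,\cdot\,]$ equals $1$ when the bracketed condition holds and $0$ otherwise. Checking the three cases $\hx_\kx=1$, $\hx_\kx=2$ and $\hx_\kx\ge3$ makes the right-hand side $n$, $n$ and $n+3-\hx_\kx\le n$ respectively, and the case $n=1$ is immediate since $\chi(1)$ alone suffices.

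The step I expect to be the main obstacle is the coincidence bookkeeping of the third paragraph: the telescoping estimate on its own loses two units, and recovering them forces one to determine \emph{exactly} which queried partitions repeat --- that the bottom-level content value duplicates a hook value precisely when $\hx_\kx=2$, and that the $m=1$ hook value at the bottom level duplicates the already-computed $\dx_\kx$ precisely when $\hx_\kx=1$ --- together with a short argument that no other repetitions occur. The rest --- enumerating the queries, the monotonicity of $\hx_1,\dots,\hx_\kx$, and the telescoping sum --- is routine.
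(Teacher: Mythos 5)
Your proposal is correct and follows essentially the same route as the paper: enumerate the queries feeding $\hx(\chi)$, $\dx(\chi)$, $\cx(\chi)$, bound the hook/degree queries by the telescoping count $n-\hx_{\kx(\chi)}(\chi)-\kx(\chi)+2$ (plus $\chi(1)$), and recover the needed savings from exactly the same three observations the paper uses --- the $m=1$ query duplicating $\dx_{\kx(\chi)}(\chi)$ when $\hx_{\kx(\chi)}(\chi)=1$, the convention $\cx_{\kx(\chi)}(\chi)=0$ in that case, and the content query coinciding with a $\dx$ query when $\hx_{\kx(\chi)}(\chi)=2$. Your additional verification that no other queried partitions coincide is not needed for the upper bound but does no harm.
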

\begin{proof}
  If $\hx_{\kx(\chi)}(\chi)\geq 2$, then the number of
  values $\chi(g)$ that go into $\hx(\chi)$ and $\dx(\chi)$~is
  \[1+(n-\hx_1(\chi)+1)+\sum_{i=1}^{\kx(\chi)-1}(\hx_i(\chi)-\hx_{i+1}(\chi)-1)
    =n-\hx_{\kx(\chi)}(\chi)-\kx(\chi)+3.\]
  If $\hx_{\kx(\chi)}(\chi)<2$, then $\hx(\chi)$ and $\dx(\chi)$ together use
  $n-\hx_{\kx(\chi)}(\chi)-\kx(\chi)+2$ queries, since
  after querying $\chi([\hx_1(\chi),\hx_2(\chi),\ldots,\hx_{\kx(\chi)-1}(\chi),2])=0$, it follows
  that $\hx_{\kx(\chi)}(\chi)=1$ and $\dx_{\kx(\chi)}(\chi)=\dx_{\kx(\chi)-1}(\chi)$, i.e.\
   $\dx_{\kx(\chi)}(\chi)$ was already queried earlier in the algorithm.

   If $\hx_{\kx(\chi)}(\chi)>2$, then $\cx(\chi)$
   uses $\kx(\chi)$ additional queries.
   If $\hx_{\kx(\chi)}(\chi)\leq 2$,
   then $\cx(\chi)$
   uses $\kx(\chi)-1$ queries,
   since $\cx_{\kx(\chi)}(\chi)=0$ if $\hx_{\kx(\chi)}(\chi)=1$,
   and the values that go into 
   $\cx_{\kx(\chi)}(\chi)$ were already queried for $\dx(\chi)$ if $\hx_{\kx(\chi)}(\chi)=2$.
   
   So if $q(\chi)$ denotes the number of queries to construct $(\ax(\chi)\mid \bx(\chi))$, then
   \begin{equation}
     q(\chi)=\begin{cases}
       n-\hx_{\kx(\chi)}(\chi)+3
       & \text{if $\hx_{\kx(\chi)}(\chi)\geq 3$,}\\
       n & \text{if $\hx_{\kx(\chi)}(\chi)\leq 2$.}
     \end{cases}
   \end{equation}
 \end{proof}

\begin{remark}
  For $\chi\in\Irr(S_n)$, all of the queried values $\chi(g)$
    that go into computing $(\ax(\chi)\mid\bx(\chi))$
    are trivial to compute even if $n$ is very large, since each
    is either trivially zero in the sense of \cite{MillerScheinerman}
    (for the $\hx_i(\chi)$'s), can be
    quickly computed by using the so-called hook-length formula \cite{JamesKerber,Macdonald} (the $\dx_i(\chi)$'s), or can be quickly computed using the formula given by Definition~\ref{cx def} and Lemma~\ref{content lemma} (the $\cx_i(\chi)$'s).
\end{remark}

\section{Identifying classes}
\noindent
We now deal with the natural dual problem of determining a class $g^{S_n}$ from
a set of values~${\chi_\lambda(g)}$.
The solution is
given by the following well-known lemmas.
Let
\[
  \xi_{n,k}=\chi_{(n-k,1,1,\ldots,1)}\in\Irr(S_n),\quad k=0,1,2,\ldots,n-1.
\]
If $V$ is an $S_n$-module, let $\chi_V$ denote the character
afforded by $V$. 
Given a partition $\lambda$ of $n$,
let $V_\lambda$ denote the $S_n$-module affording 
$\chi_\lambda\in \Irr(S_n)$.

\begin{lemma}\label{ext hooks}
$\xi_{n,k}=\chi_{\wedge^k V_{(n-1,1)}}$ for $k=0,1,2,\ldots,n-1$.
\end{lemma}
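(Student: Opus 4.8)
The plan is to induct on $k$, playing off the two classical decompositions of the exterior powers of the defining permutation module $\mathbb{C}^n$ of $S_n$. Recall first that $\mathbb{C}^n\cong\ones\oplus V_{(n-1,1)}$, where $\ones$ denotes the trivial $S_n$-module and $V_{(n-1,1)}$ the standard module. Applying the natural ($S_n$-equivariant) exterior-algebra identity $\wedge^k(A\oplus B)\cong\bigoplus_{i}(\wedge^i A)\otimes(\wedge^{k-i}B)$ with $A=\ones$ (so that $\wedge^0\ones=\ones$ and $\wedge^i\ones=0$ for $i\geq 2$), one gets
\[
  \wedge^k\mathbb{C}^n\;\cong\;\wedge^k V_{(n-1,1)}\;\oplus\;\wedge^{k-1}V_{(n-1,1)},\qquad 0\leq k\leq n,
\]
with the convention $\wedge^{-1}V_{(n-1,1)}=0$. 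On the other hand, it is classical that $\wedge^0\mathbb{C}^n$ affords $\chi_{(n)}=\xi_{n,0}$, that $\wedge^n\mathbb{C}^n$ affords $\chi_{(1^n)}=\xi_{n,n-1}$, and that for $1\leq k\leq n-1$ the module $\wedge^k\mathbb{C}^n$ affords $\chi_{(n-k,1^k)}+\chi_{(n-k+1,1^{k-1})}=\xi_{n,k}+\xi_{n,k-1}$.

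Granting this, the lemma follows by induction on $k$. For $k=0$, $\wedge^0 V_{(n-1,1)}=\ones$ affords $\chi_{(n)}=\xi_{n,0}$. Suppose $1\leq k\leq n-1$ and that $\wedge^{k-1}V_{(n-1,1)}$ affords $\xi_{n,k-1}$. Comparing the two displays of $\wedge^k\mathbb{C}^n$ above, the module $\wedge^k V_{(n-1,1)}$ together with a module affording $\xi_{n,k-1}$ affords $\xi_{n,k}+\xi_{n,k-1}$; since $S_n$-modules are semisimple and determined up to isomorphism by their characters, cancelling the common irreducible constituent $\xi_{n,k-1}$ shows that $\wedge^k V_{(n-1,1)}$ affords $\xi_{n,k}$. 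This completes the induction, and in particular handles $k=n-1$ (where $\wedge^{n-1}V_{(n-1,1)}$ is the top exterior power, affording the sign character $\chi_{(1^n)}=\xi_{n,n-1}$).

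The only ingredient here that is not pure bookkeeping is the decomposition of $\wedge^k\mathbb{C}^n$ into the two hook characters, so that is where I expect the real content to sit; one may simply cite it (e.g.\ \cite{JamesKerber}), or derive it in a self-contained way as follows. The character of $\wedge^k\mathbb{C}^n$ at a permutation $g$ whose cycle type has $a_j$ cycles of length $j$ is the coefficient of $t^k$ in $\det(I+t\,g)=\prod_{j\geq 1}(1-(-t)^j)^{a_j}$. On the other side, the Murnaghan--Nakayama rule applied to hook partitions gives $\sum_{k=0}^{n-1}\xi_{n,k}(g)\,t^k=\frac{1}{1+t}\prod_{j\geq 1}(1-(-t)^j)^{a_j}$. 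Multiplying the latter by $(1+t)$ and matching coefficients of $t^k$ yields $\chi_{\wedge^k\mathbb{C}^n}(g)=\xi_{n,k}(g)+\xi_{n,k-1}(g)$ for every $g$, which is the claim. Since the statement is flagged as well known, citing it outright is equally acceptable.
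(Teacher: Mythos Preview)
Your argument is correct, but it is organised quite differently from the paper's. The paper inducts on $n$ via the branching rule: restricting $\wedge^k V_{(n,1)}$ to $S_n$ gives $\wedge^{k-1}V_{(n-1,1)}\oplus\wedge^k V_{(n-1,1)}$, which by the inductive hypothesis is a sum of two consecutive hook irreducibles, and this restriction pins down the $S_{n+1}$-module uniquely as $V_{(n+1-k,1^k)}$. You instead fix $n$ and induct on $k$, feeding in the decomposition of $\wedge^k\mathbb C^n$ into two hook characters. That decomposition is essentially the content of the paper's Lemma~\ref{class id lemma}, which the paper \emph{derives from} the present lemma; you reverse the logical order by supplying an independent Murnaghan--Nakayama/generating-function proof of it. Your route sidesteps the small subtlety of recognising an $S_{n+1}$-module from its restriction, at the cost of importing (or reproving) the hook-character identity for $\wedge^k\mathbb C^n$; the paper's route is self-contained via branching alone and then harvests that identity afterwards.
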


\begin{proof}
  This follows by inducting on $n$ and using the branching rule. 
  The case $n=1$ is clear. Assuming  
  the claim for a fixed $n\geq 1$ and considering the next case,
  certainly $\wedge^kV_{(n,1)}=V_{(n+1)}$ if $k=0$.
  If $1\leq k\leq n$, then as $S_n$-modules,
  \[\wedge^k V_{(n,1)}\cong \wedge^k(V_{(n)}\oplus V_{(n-1,1)})
    \cong \wedge^{k-1} V_{(n-1,1)}\oplus \wedge^{k} V_{(n-1,1)},\]
  and by hypothesis the right-hand side is
  isomorphic to 
  $V_{(n-k+1,1,1,\ldots,1)}\oplus V_{(n-k,1,1,\ldots,1)}$ if $k\neq n$, and $V_{(n-k+1,1,1,\ldots,1)}$ if $k=n$.
  By the branching rule again, this implies that, as $S_{n+1}$-modules, 
  $\wedge^k V_{(n,1)}\cong V_{(n+1-k,1,1,\ldots,1)}$ for $k=0,1,\ldots,{n+1}$.
\end{proof}

\begin{lemma}\label{class id lemma}
  For any partition $\nu$ of $n$,
  \[(X-1)\sum_{k=0}^{n-1} (-1)^k\xi_{n,k}(\nu) X^{n-1-k}=\prod_{k=1}^{\ell(\nu)}(X^{\nu_k}-1).\]
  In particular, $\nu$ is determined
  by the values $\xi_{n,n-1}(\nu),\xi_{n,n-2}(\nu),\ldots,\xi_{n,n-\lfloor\frac{n}{2}\rfloor}(\nu)$.
\end{lemma}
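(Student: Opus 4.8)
The plan is to identify the character $\xi_{n,k}$ with an exterior power of the standard representation, as in Lemma~\ref{ext hooks}, and then recognize the generating function $\sum_k (-1)^k \xi_{n,k}(\nu) X^{n-1-k}$ as essentially the characteristic polynomial of a permutation matrix. Concretely, let $g\in S_n$ have cycle type $\nu$ and act on $\mathbb{C}^n$ by the natural permutation representation $P$, so that $\chi_P(\nu) = \prod_k(X^{\nu_k}-1)$ is replaced by the familiar factorization $\det(XI - g) = \prod_{k=1}^{\ell(\nu)}(X^{\nu_k}-1)$. Since $P \cong V_{(n)}\oplus V_{(n-1,1)}$, we have $\wedge^k P \cong \wedge^k V_{(n-1,1)} \oplus \wedge^{k-1} V_{(n-1,1)}$ as $S_n$-modules, which by Lemma~\ref{ext hooks} gives $\chi_{\wedge^k P}(\nu) = \xi_{n,k}(\nu) + \xi_{n,k-1}(\nu)$ (with the convention $\xi_{n,-1}=\xi_{n,n}=0$).

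First I would write down the standard identity $\det(XI - g) = \sum_{k=0}^{n} (-1)^k \chi_{\wedge^k P}(g)\, X^{n-k}$, valid for any $g\in\GL_n(\mathbb{C})$ — this is just the expansion of the characteristic polynomial in terms of traces on exterior powers. Next I would substitute $\chi_{\wedge^k P}(\nu) = \xi_{n,k}(\nu) + \xi_{n,k-1}(\nu)$ and observe that the sum telescopes: $\sum_{k=0}^{n} (-1)^k (\xi_{n,k}(\nu) + \xi_{n,k-1}(\nu)) X^{n-k} = (1 - X^{-1})\sum_{k=0}^{n-1}(-1)^k \xi_{n,k}(\nu) X^{n-k}$, so that $\det(XI-g) = (X-1)\sum_{k=0}^{n-1}(-1)^k \xi_{n,k}(\nu) X^{n-1-k}$ after clearing $X^{-1}$. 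On the other hand, a permutation matrix of cycle type $\nu$ is block-diagonal with a companion-type block of size $\nu_k$ for each part, and each such block contributes a factor $X^{\nu_k} - 1$ to the determinant; hence $\det(XI - g) = \prod_{k=1}^{\ell(\nu)}(X^{\nu_k}-1)$. Comparing the two expressions gives the displayed identity.

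For the final sentence, I would note that the polynomial $\prod_{k=1}^{\ell(\nu)}(X^{\nu_k}-1)$ determines, and is determined by, the multiset $\{\nu_1,\ldots,\nu_{\ell(\nu)}\}$, i.e.\ the partition $\nu$: one can read off the parts from the set of roots (with multiplicity, each a root of unity), or more simply observe that knowing this product for all $\nu$ injects partitions into polynomials. Since the left-hand side is an explicit polynomial in $X$ whose coefficients are $\pm\xi_{n,k}(\nu)$, the values $\xi_{n,0}(\nu),\xi_{n,1}(\nu),\ldots,\xi_{n,n-1}(\nu)$ determine $\nu$. To get the sharper claim that only $\xi_{n,n-1}(\nu),\ldots,\xi_{n,n-\lfloor n/2\rfloor}(\nu)$ are needed, I would use the self-reciprocal structure: applying the identity to $\nu$ one sees that $\prod_k(X^{\nu_k}-1)$ is, up to sign and a power of $X$, palindromic, so its low-degree coefficients determine its high-degree coefficients; equivalently, $\xi_{n,k}$ and $\xi_{n,n-1-k}$ are related (they are the characters $\wedge^k$ and $\wedge^{n-1-k}$ of the $(n-1)$-dimensional module $V_{(n-1,1)}$, hence equal up to the determinant character, which is trivial on $S_n$ since $V_{(n-1,1)}$ has... — more carefully, $\wedge^{n-1-k}V_{(n-1,1)}\cong \wedge^k V_{(n-1,1)}\otimes \wedge^{n-1}V_{(n-1,1)}$ and $\wedge^{n-1}V_{(n-1,1)}$ is the sign character, so $\xi_{n,n-1-k} = \xi_{n,k}\otimes\mathrm{sgn}$). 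Thus the top half of the coefficient list recovers the whole list, and hence $\nu$.

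The main obstacle, I expect, is pinning down the reciprocity step cleanly: the relation between $\xi_{n,k}$ and $\xi_{n,n-1-k}$ involves the sign character, so on a class $\nu$ one gets $\xi_{n,n-1-k}(\nu) = \mathrm{sgn}(\nu)\,\xi_{n,k}(\nu)$, and one must check that $\mathrm{sgn}(\nu)$ is itself recoverable from the already-known values (it is: $\mathrm{sgn}(\nu) = (-1)^{n-\ell(\nu)}$ and $\ell(\nu)$ is visible as the multiplicity of $X=1$ as a root, or one can simply note $\xi_{n,0}(\nu)=1$ is always known and handle parity bookkeeping directly). Everything else is a routine manipulation of the characteristic polynomial and a telescoping sum.
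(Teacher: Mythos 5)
Your proof of the displayed identity is correct and is in substance the paper's own argument: both of you compute the characteristic polynomial of the permutation module $\mathbb{C}^n\cong V_{(n)}\oplus V_{(n-1,1)}$ and invoke Lemma~\ref{ext hooks}. The paper divides out the factor $X-1$ coming from the trivial summand and expands $\prod_k(X-\zeta_k)$ over the eigenvalues on $V_{(n-1,1)}$, while you expand $\det(XI-g)$ over the exterior powers of all of $\mathbb{C}^n$ and telescope; this is only a cosmetic difference.

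The gap is in your deduction of the ``in particular'' clause. The relation $\xi_{n,n-1-k}(\nu)=\mathrm{sgn}(\nu)\,\xi_{n,k}(\nu)$ recovers the whole coefficient list from the given top half only when $n$ is even. When $n=2m+1$ is odd, the given values are $\xi_{n,k}(\nu)$ for $m+1\le k\le 2m$, the sign twist returns $\xi_{n,j}(\nu)$ for $0\le j\le m-1$, and the middle value is self-paired: the relation reads $\xi_{n,m}(\nu)=\mathrm{sgn}(\nu)\,\xi_{n,m}(\nu)$, which is vacuous whenever $\mathrm{sgn}(\nu)=1$. So you do not reconstruct the polynomial $\prod_k(X^{\nu_k}-1)$, and ``the top half recovers the whole list, hence $\nu$'' fails as written. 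What actually does the work is that the given values are, up to sign, the coefficients of $X^0,\ldots,X^{\lfloor n/2\rfloor-1}$ of $\prod_k(X^{\nu_k}-1)/(X-1)$, hence they determine $\prod_k(X^{\nu_k}-1)=(-1)^{\ell(\nu)}\prod_k(1-X^{\nu_k})$ modulo $X^{\lfloor n/2\rfloor}$. The constant term gives $(-1)^{\ell(\nu)}$; the truncated product then determines, by induction on the part size, the multiplicity of every part of size less than $\lfloor n/2\rfloor$; and since for $n\ge 4$ at most two parts can have size at least $\lfloor n/2\rfloor$ and the parity of their number is known, the remaining parts are forced by the total $n$. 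Some argument of this kind is unavoidable: purely palindromic symmetry cannot suffice, and indeed for $n=3$ the given data is just $\mathrm{sgn}(\nu)$, which does not separate $(1,1,1)$ from $(3)$, so the clause should be read with $n\ne 3$ (the paper's application has $n>6$).
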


\begin{proof}
  Let $p(X)=\det(X-gI)$ for an element $g$ with cycle type $\nu$ under
  the permutation representation $V=\mathbb C^n$ given by
  $\sigma\mapsto (\delta_{i\sigma(j)})_{i,j}$, so 
  ${p(X)=\prod_{k=1}^{\ell(\nu)} (X^{\nu_k}-1)}$.  
  Decomposing $V$ as $V_{(n)}\oplus V_{(n-1,1)}$ and denoting the
  eigenvalues of $g$ on $V_{(n-1,1)}$ by $\zeta_1,\zeta_2,\ldots,\zeta_{n-1}$, then
  \[\frac{p(X)}{X-1}
    =
    \prod_{k=1}^{n-1}(X-\zeta_k)
    =
    \sum_{k=0}^{n-1} (-1)^k \chi_{\wedge^kV_{(n-1,1)}}(g) X^{n-1-k}=\sum_{k=0}^{n-1}(-1)^k\xi_{n,k}(\nu)X^{n-1-k},\]
  where the last equality is by Lemma~\ref{ext hooks}.
\end{proof}

\section{Identifying both characters and classes}
Throughout this section, by character table of $S_n$ we shall mean the table
\[[\chi_i(g_j)]_{1\leq i,j\leq p_n}\]
for some arbitrary but fixed ordering of the irreducible characters
$\chi_1,\chi_2,\ldots,\chi_{p_n}$ and classes $g_1^{S_n},g_2^{S_n},\ldots,g_{p_n}^{S_n}$.
The object is to identify the characters and classes by successively querying entries $T_{ij}$ (i.e., querying the function $(i,j)\mapsto T_{ij}$), which is possible only if $n\neq 4,6$ by inspection. In more visual terms, all of the entries of $T$ are covered up and the object is to identify the characters $\chi_i$ and classes $g_j^{S_n}$ by successively uncovering various entries $T_{ij}$. 

We start with two well-known facts in Lemmas~\ref{unique max} and~\ref{n-1}.

\begin{lemma}\label{unique max}
  If $n\neq4$ and $\chi\in\Irr(S_n)$ with $\chi(1)>1$, then
  $\chi(1)>|\chi(g)|$ for $g\neq 1$.
\end{lemma}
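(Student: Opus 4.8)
The plan is to prove the statement by a direct combinatorial argument on the character values, combining the branching rule with an induction on $n$, supplemented by a finite check for small $n$. The claim is that for $n \neq 4$, a nonlinear irreducible character $\chi_\lambda$ of $S_n$ attains its strict maximum absolute value at the identity. The linear characters are $\chi_{(n)}$ and $\chi_{(1^n)}$, so $\chi(1) > 1$ excludes exactly these two, and we must show $|\chi_\lambda(g)| < \chi_\lambda(1)$ for all $g \neq 1$.

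First I would set up the induction. For the base cases, one checks $n \leq 7$ (excluding $n=4$) directly from the character tables; the only genuinely exceptional behavior at $n=4$ is the two-dimensional character $\chi_{(2,2)}$, which takes the value $2 = \chi_{(2,2)}(1)$ on the class of type $(2,2)$. For the inductive step, fix $n \geq 8$, let $\lambda \vdash n$ with $\chi_\lambda(1) > 1$, and fix $g \neq 1$. Choose a point $k$ moved by $g$ and restrict to the stabilizer $S_{n-1}$: the branching rule gives $\chi_\lambda\!\downarrow_{S_{n-1}} = \sum_{\mu} \chi_\mu$, the sum over partitions $\mu$ of $n-1$ obtained from $\lambda$ by removing one box. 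Since $g$ fixes $k$ it lies in this $S_{n-1}$, so $\chi_\lambda(g) = \sum_\mu \chi_\mu(g')$ where $g'$ is the image of $g$. The triangle inequality gives $|\chi_\lambda(g)| \leq \sum_\mu |\chi_\mu(g')| \leq \sum_\mu \chi_\mu(1) = \chi_\lambda(1)$, with the middle inequality using the inductive hypothesis on each $\chi_\mu$ with $\chi_\mu(1)>1$ (and the trivial bound $|\chi_\mu(g')| \leq 1 = \chi_\mu(1)$ when $\chi_\mu$ is linear). So equality $|\chi_\lambda(g)| = \chi_\lambda(1)$ can only occur if, for \emph{every} removable box of $\lambda$, equality holds in the inductive bound for $\chi_\mu$ and moreover all the $\chi_\mu(g')$ have the same argument.

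The heart of the argument is then ruling out this equality case. If $\lambda$ has at least two removable boxes, pick one giving $\mu$ nonlinear (possible once $n$ is large enough that not all the $\mu$ can be linear — this forces $\lambda$ to be a hook or near-hook, handled separately), and by induction $|\chi_\mu(g')| = \chi_\mu(1)$ already forces $g'$ to be the identity, hence $g$ is a transposition $(k\,m)$ or a $3$-cycle through $k$; but then I can choose a \emph{different} fixed point $k'$ of $g$ (available since $n \geq 8$ and $g$ moves few points) for which the restriction argument gives a strict inequality, contradiction. If $\lambda$ has a unique removable box it is a rectangle; rectangles other than $(n)$ and $(1^n)$ are easily treated by instead \emph{adding} a box, i.e. working inside $S_{n+1}$, or by the explicit estimates available for such $\lambda$. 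I would organize these cases so that the generic case is the clean branching argument and the exceptional shapes (hooks, two-row/two-column, rectangles) are dispatched by elementary direct bounds.

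The main obstacle I anticipate is precisely the bookkeeping in the equality case: ensuring that for the relevant $\lambda$ one can always locate \emph{two} distinct fixed points of $g$ at which to restrict, so that the "same argument'' coincidence cannot be sustained. This is where the hypothesis $n \neq 4$ (and the finitely many small-$n$ checks) must be invoked, and where one has to be careful that $g$, being nontrivial, is nonetheless supported on so few points that extra fixed points exist — a short but slightly delicate argument, since $g$ could a priori be, say, a product of many transpositions. Handling that subcase may require noting that if $g$ has large support then even the first restriction already yields a strict inequality because at least one $\chi_\mu(g')$ is strictly submaximal by induction.
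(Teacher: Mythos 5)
Your proposed induction has a genuine gap: the restriction argument only applies to elements $g$ that fix a point. (Your text is already internally inconsistent here: you choose ``a point $k$ moved by $g$'' and then immediately assert ``since $g$ fixes $k$ it lies in this $S_{n-1}$.'') If $g$ is fixed-point-free --- an $n$-cycle, a fixed-point-free involution, or more generally any permutation all of whose cycles have length at least $2$ --- then $g$ lies in no point stabilizer, $\chi_\lambda(g)$ is not a sum of $S_{n-1}$-character values of $g$, and the inductive step never gets off the ground. Your closing remark that elements of large support are handled because ``even the first restriction already yields a strict inequality'' is vacuous in exactly this case: there is no first restriction. Moreover such classes cannot be waved away (for a fixed-point-free involution the centralizer has order $2^{n/2}(n/2)!$, so orthogonality-type size bounds do not rescue you), so a separate idea is genuinely needed. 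The equality-case analysis is also muddled: once $k$ is a fixed point of $g$ and the induced $g'\in S_{n-1}$ is forced to be the identity, you get $g=1$ immediately, so the ``transposition or $3$-cycle through $k$'' discussion is spurious; and the rectangle case ``by working inside $S_{n+1}$'' is not compatible with an induction on $n$ (rectangles are in fact the easy case, since there the restriction is irreducible).

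For comparison, the paper simply cites a known lemma, and the standard proof is much shorter and handles all $g$ uniformly: for any finite group, $|\chi(g)|=\chi(1)$ forces $g$ to act as a scalar in the representation, i.e.\ $g\in Z(\chi)$, where $Z(\chi)/\ker\chi=Z(G/\ker\chi)$. For $n\neq 4$ the only normal subgroups of $S_n$ are $1$, $A_n$, $S_n$, so a character with $\chi(1)>1$ has trivial kernel, whence $Z(\chi)=Z(S_n)=1$ for $n\geq 3$ and the strict inequality follows for every $g\neq 1$; the case $n=4$ is excluded precisely because $\ker\chi_{(2,2)}$ is the Klein four-group. If you want to keep a branching-rule proof, you must add an argument for fixed-point-free classes (e.g.\ via the Murnaghan--Nakayama rule), which is exactly the part your outline omits.
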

\begin{proof}
See for example \cite[Lemma~6]{MillerPisa}.
\end{proof}

\begin{lemma}\label{n-1}
  If $n\neq 6$ and $\chi_\lambda\in\Irr(S_n)$ with $\chi_\lambda(1)>1$, then
  $\chi_\lambda(1)\geq n-1$ with equality if and only if $\lambda\in\{(n-1,1),(2,1,1,\ldots,1)\}$.
\end{lemma}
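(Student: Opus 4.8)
\textbf{Proof proposal for Lemma~\ref{n-1}.}

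The plan is to prove the statement by a direct combinatorial argument using the hook-length formula, reducing the question to a statement about which partitions $\lambda$ of $n$ have $\chi_\lambda(1)$ small. Recall that $\chi_\lambda(1) = n!/\prod_{b\in\lambda}\h_b(\lambda)$. First I would dispose of the extremes: if $\lambda=(n)$ or $\lambda=(1^n)$, then $\chi_\lambda(1)=1$, so these are exactly the cases excluded by the hypothesis $\chi_\lambda(1)>1$. For the two claimed equality cases, a quick computation with the hook-length formula gives $\chi_{(n-1,1)}(1)=n-1$, and since $(2,1^{n-2})=(n-1,1)'$ and conjugate partitions afford characters of the same degree (their diagrams have the same multiset of hook lengths), $\chi_{(2,1^{n-2})}(1)=n-1$ as well. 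So the content of the lemma is the \emph{strict} inequality $\chi_\lambda(1)>n-1$ for every remaining $\lambda$.

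For the main inequality I would argue as follows. Suppose $\lambda$ is a partition of $n$ with $\lambda\notin\{(n),(1^n),(n-1,1),(2,1^{n-2})\}$; I want $\chi_\lambda(1)\ge n$. One clean way is to induct on $n$ via the branching rule: $\chi_\lambda(1)=\sum_{\mu} \chi_\mu(1)$, the sum over partitions $\mu$ of $n-1$ obtained by removing a removable box of $\lambda$. If $\lambda$ has at least two removable boxes, then it has two summands, and one can check that for $\lambda$ outside the excluded list at least one of the corresponding $\mu$ already has $\chi_\mu(1)\ge n-2$ while the other contributes at least $1$ (or push the induction down one more level); summing gives $\ge n-1$, and a little care with the boundary cases of the induction upgrades this to $\ge n$. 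If $\lambda$ has a unique removable box, then $\lambda$ is a rectangle $(a^{b})$ with $ab=n$ and $a,b\ge 2$ (since we have excluded the single-row and single-column shapes); here removing the box gives $\chi_\lambda(1)=\chi_{(a^{b-1},a-1)}(1)$, and the partition $(a^{b-1},a-1)$ of $n-1$ is not in the excluded list once $n$ is large enough, so the induction applies and yields $\chi_\lambda(1)\ge n-1$, again to be sharpened to $\ge n$ by treating the base cases explicitly. Alternatively, and perhaps more transparently, one can bound $\chi_\lambda(1)$ directly: the largest hook length of $\lambda$ is $\h_1(\lambda)=\lambda_1+\lambda_1'-1\le n-1$ unless $\lambda$ is a hook, and counting how many boxes can have large hook lengths forces $\prod_b \h_b(\lambda)<(n-1)!$ whenever $\lambda$ is not one of the four listed shapes, giving $\chi_\lambda(1)>n$.

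The main obstacle I anticipate is the bookkeeping at the boundary: the inequality to be proved is sharp (the value $n-1$ is attained), so the induction cannot simply assert ``$\chi_\mu(1)\ge n-2$ for all relevant $\mu$'' — one must carefully identify the finitely many small cases where a summand $\mu$ lands in the excluded list (so contributes exactly $1$ or $n-2$) and verify by hand that the total still reaches $n$, which is precisely where the excluded value $n=6$ enters (for $S_6$ there are two characters of degree $5$ beyond $\chi_{(5,1)}$ and $\chi_{(2,1^4)}$, so the lemma genuinely fails there). Since the statement is declared well known, I would in practice just cite it — e.g.\ \cite[2.4]{JamesKerber} or the table of low-degree $S_n$-characters — but the inductive argument above is the self-contained route, with the $n\neq 6$ hypothesis pinpointing exactly the exceptional degree coincidence.
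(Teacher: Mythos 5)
Your main route---induction on $n$ via the branching rule, with the boundary bookkeeping that isolates the degree coincidence at $n=6$---is exactly the paper's proof, which consists of the single line ``By induction on $n$ and the branching rule.'' Your explicit verification of the equality cases and the alternative hook-length bound are extra detail, but the core approach is the same.
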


\begin{proof}
  By induction on $n$ and the branching rule.
\end{proof}

\begin{lemma}\label{detect hook}
  If $\lambda$ is a partition of $n\geq 4$, then $\lambda=(n-k,1,1,\ldots,1)$ for some $k$
  if and only if
  $\chi_{\lambda}(1)=4\chi_\lambda((1\, 2\, 3))-3\chi_\lambda((1\,2)(3\,4))$.
\end{lemma}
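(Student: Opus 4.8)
The plan is to use the Murnaghan–Nakayama rule to express both sides in terms of symmetric functions of the contents of $\lambda$, and then characterize hooks by a polynomial identity in the content generating function. Recall that for a partition $\lambda$ of $n$, the value $\chi_\lambda$ on a small permutation of cycle type $[\mu,1^{n-|\mu|}]$ can be written as a power-sum-type symmetric function in the contents $\{c(b):b\in\lambda\}$; concretely, combining the Murnaghan–Nakayama rule with the characterization of $\chi_\lambda((1\,2))/\chi_\lambda(1)$ in Eq.~\eqref{content sum} and its analogues for $(1\,2\,3)$ and $(1\,2)(3\,4)$, one gets (for $n\ge 4$)
\[
  \frac{\chi_\lambda((1\,2\,3))}{\chi_\lambda(1)}=\frac{p_2(\lambda)-\binom{n}{1}}{\text{(binomial factor)}},\qquad
  \frac{\chi_\lambda((1\,2)(3\,4))}{\chi_\lambda(1)}=\frac{p_1(\lambda)^2-\cdots}{\text{(binomial factor)}},
\]
where $p_j(\lambda)=\sum_{b\in\lambda}c(b)^j$. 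The cleaner route is to recall the classical fact that $\chi_\lambda$ on any class is a polynomial in the $p_j(\lambda)$, and that $\lambda$ is a hook $(n-k,1^k)$ \emph{if and only if} its multiset of contents is $\{-k,-k+1,\ldots,-1,0,1,\ldots,n-k-1\}$, i.e.\ an interval of $n$ consecutive integers containing $0$.

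First I would reduce the claimed identity, via the known expansions of $\chi_\lambda((1\,2\,3))/\chi_\lambda(1)$ and $\chi_\lambda((1\,2)(3\,4))/\chi_\lambda(1)$ in the $p_j(\lambda)$, to a polynomial relation purely among $p_1(\lambda)$, $p_2(\lambda)$ (and constants depending only on $n$). The combination $4\chi_\lambda((1\,2\,3))-3\chi_\lambda((1\,2)(3\,4))$ is engineered so that, after dividing by $\chi_\lambda(1)$, the dependence on $p_1(\lambda)$ collapses and one is left with an equation equivalent to
\[
  p_2(\lambda)=\sum_{b\in\lambda}c(b)^2 = \binom{n+1}{3}\qu中\text{-type expression},
\]
i.e.\ $\sum_{b\in\lambda}c(b)^2$ equals the value it takes when the contents form the interval $\{-k,\ldots,n-1-k\}$. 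So the proof reduces to: \emph{among all multisets of contents of partitions of $n$, the quantity $\sum c(b)^2$ is minimized precisely by the hooks, which all share a common value of $\sum c(b)^2$ (independent of $k$), and no non-hook achieves it.}

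The main step is therefore the extremal/combinatorial claim: for a fixed $n$, $\sum_{b\in\lambda}c(b)^2$ is minimized exactly when $\lambda$ is a hook, and all hooks of $n$ give the same value. The equality-for-all-hooks part is a one-line check, since for $(n-k,1^k)$ the contents are $\{-k,-k+1,\ldots,n-1-k\}$, a shift of $\{0,1,\ldots,n-1\}$, and $\sum_{j=0}^{n-1}(j-k)^2 - \sum_{j=0}^{n-1}j^2$ telescopes to something one checks is actually $k$-dependent — so in fact I expect the correct invariant combination to be $\sum c(b)^2 - (\sum c(b))^2/\text{(something)}$, or equivalently the identity is better phrased after eliminating $\c_1(\lambda)$ using Eq.~\eqref{content sum}; the honest statement is that $4\chi_\lambda((1\,2\,3))-3\chi_\lambda((1\,2)(3\,4))-\chi_\lambda(1)$, divided by $\chi_\lambda(1)$, equals a fixed multiple of $\bigl(\sum_b c(b)^2 - \text{(hook value as a function of }\c_1\text{)}\bigr)$, which vanishes iff the contents are an interval. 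For the "only hooks" direction I would argue that if $\lambda$ is not a hook then its Young diagram contains the box $(2,2)$, and a comparison of $\sum c(b)^2$ for $\lambda$ versus the hook with the same arm-plus-leg shows a strict increase — e.g.\ by a "slide" argument moving boxes outward from $(2,2)$, each move strictly increasing $\sum c(b)^2$ while preserving $n$ and suitably tracking $\c_1$.

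The hard part will be packaging this cleanly: getting the precise symmetric-function expansions of $\chi_\lambda((1\,2\,3))$ and $\chi_\lambda((1\,2)(3\,4))$ in terms of $p_1,p_2$ with the correct $n$-dependent constants (these are standard via Murnaghan–Nakayama or the Jucys–Murphy/shifted-symmetric-function formalism, but the bookkeeping is delicate), and then verifying that the coefficients $4$ and $-3$ are exactly what is needed to kill the $p_1$-terms so the residual condition is "contents form a consecutive interval," which is equivalent to $\lambda$ being a hook. Given the expansions, the converse (hook $\Rightarrow$ identity) is a direct substitution, and the forward direction (identity $\Rightarrow$ hook) follows from the strict monotonicity of $\sum c(b)^2$ under pushing the diagram away from the hook shape at fixed $n$. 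Since $n\ge 4$ is assumed, the degenerate small cases where $(2,2)\notin\lambda$ for every non-hook do not arise.
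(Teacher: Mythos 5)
Your central step has a genuine gap, and two of the assertions it rests on are false as stated. First, the claimed cancellation of the $p_1$-dependence does not happen: writing $p_j=\sum_{b\in\lambda}c(b)^j$, the normalized values are $\binom{n}{2}\chi_\lambda((1\,2))/\chi_\lambda(1)=p_1$, $2\binom{n}{3}\chi_\lambda((1\,2\,3))/\chi_\lambda(1)=p_2-\binom{n}{2}$, and, from the class-algebra identity $C_{(2)}^2=\binom{n}{2}\,\mathrm{id}+3C_{(3)}+2C_{(2,2)}$, $3\binom{n}{4}\chi_\lambda((1\,2)(3\,4))/\chi_\lambda(1)=\tfrac12\bigl(p_1^2-3p_2\bigr)+\binom{n}{2}$. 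Hence $4\chi_\lambda((1\,2\,3))-3\chi_\lambda((1\,2)(3\,4))-\chi_\lambda(1)$, divided by $\chi_\lambda(1)$, retains a $p_1^2$ term with nonzero coefficient, so the condition is a genuinely two-variable relation in $(p_1,p_2)$, not ``$p_2$ equals a fixed hook value.'' Second, the extremal claim you reduce to is false: hooks neither minimize $\sum_b c(b)^2$ nor share a common value of it. For $n=4$ the non-hook $(2,2)$ has $\sum_b c(b)^2=2$, strictly smaller than the hooks $(3,1)$ and $(2,1,1)$ (value $6$), while $(4)$ gives $14$. You notice the $k$-dependence and retreat to an unspecified ``honest statement,'' but that statement is never pinned down, and your slide argument only tracks the monotonicity of $p_2$, whereas what must be shown nonzero for every non-hook is the particular quadratic combination of $p_1$ and $p_2$ above; nothing in the sketch controls its sign, so the crucial ``identity implies hook'' direction is not established.

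The missing idea is to interpret the linear combination as a multiplicity, which is exactly the paper's (two-line) proof: by the branching rule, $\lambda=(n-k,1,1,\ldots,1)$ for some $k$ if and only if $\langle \mathrm{Res}_{S_4}\chi_\lambda,\chi_{(2,2)}\rangle=0$, and since $\chi_{(2,2)}$ takes the values $2,0,-1,2,0$ on the classes of $1$, $(1\,2)$, $(1\,2\,3)$, $(1\,2)(3\,4)$, $(1\,2\,3\,4)$, this inner product equals $\tfrac{1}{12}\bigl[\chi_\lambda(1)-4\chi_\lambda((1\,2\,3))+3\chi_\lambda((1\,2)(3\,4))\bigr]$. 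Nonnegativity of the multiplicity then yields the equivalence immediately, with no content inequalities at all. If you insist on the content/central-character route, you would in effect have to re-derive this positivity by hand for the exact quadratic relation displayed above, verified for hooks by direct substitution and shown strictly violated for all non-hooks; as it stands, that is precisely the part your proposal leaves unproved.
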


\begin{proof}
  By the branching rule, $\lambda=(n-k,1,1,\ldots,1)$ for some $k$ if and only if $\langle {\rm Res}_{S_4}\chi_\lambda,\chi_{(2,2)}\rangle=0$,
i.e.\ if and only if $\chi_{\lambda}(1)=4\chi_\lambda((1\, 2\, 3))-3\chi_\lambda((1\,2)(3\,4))$.
\end{proof}

\begin{proposition}\label{discover hooks}
  If $n\neq 4,6$ and all of the entries of the $S_n$ character table ${T=[\chi_i(g_j)]_{1\leq i,j\leq p_n}}$ are covered up, then 
  the row positions of the
  characters
  \[\xi_k=\chi_{(n-k,1,1,\ldots,1)}\in\Irr(S_n),\quad 0\leq k\leq n-1,\]
  can 
  be determined by uncovering at most $7p_n+n$ 
  entries.
\end{proposition}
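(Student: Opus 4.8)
The plan is to locate the rows of the $\xi_k$ in two phases: first identify the two rows $\chi_{(n-1,1)}$ and $\chi_{(2,1,\dots,1)}$ (and a column of a nonidentity class) using only cheap structural queries, then bootstrap from these to find every $\xi_k$ via the Murnaghan--Nakayama-type recursion already encoded in Lemma~\ref{detect hook}.

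First I would find the identity column. Uncover one full row; by Lemma~\ref{unique max} the column where that row attains its maximum absolute value is the identity column, unless the chosen row is linear. Since there are at most two linear characters, trying at most three rows (that is, $\le 3p_n$ uncovered entries) and taking the column position that is flagged as the argmax for a non-linear row pins down the identity column $j_1$ with $g_{j_1}=1$. Now uncover the entire identity column ($p_n$ entries): its entries are the degrees $\chi_i(1)$. By Lemma~\ref{n-1}, the rows with entry exactly $n-1$ are precisely the rows of $\chi_{(n-1,1)}$ and $\chi_{(2,1,\dots,1)}$ (here we use $n\neq 6$; the degenerate cases $n=4,6$ are excluded by hypothesis, and for small $n\ne4,6$ one checks directly that $n-1>1$ so these rows are non-linear). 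To tell the two apart, and simultaneously to get a handle on the classes, uncover those two rows fully ($2p_n$ entries): $\chi_{(2,1,\dots,1)}=\mathrm{sgn}\cdot\chi_{(n-1,1)}$, so the row whose entries are all nonnegative on the columns where the other is nonnegative — equivalently, the row taking the value $n-1$ only at the identity — is $\xi_1=\chi_{(n-1,1)}$. (Concretely: $\xi_1(g)\ge -1$ always with $\xi_1(g)=n-1$ iff $g=1$, which distinguishes it from its sign twist once $n\ge 4$.)

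Next I would find a convenient nonidentity class column. From the fully uncovered row $\xi_1=\chi_{(n-1,1)}$, read off a column $j_2$ where $\xi_1(g_{j_2})\ne -1$, say the column realizing $\xi_1(g)=\#\{\text{fixed points of }g\}-1$ largest among non-identity columns; any fixed column with a known, distinctive $\xi_1$-value and $\xi_{2}$-value will do — in particular we may take a column whose class we can later recognize, but for the counting it is enough that $j_2$ is some explicitly chosen nonidentity column. Now, to identify the remaining rows $\xi_k$ for $0\le k\le n-1$, use Lemma~\ref{detect hook}: a row $i$ is one of the $\xi_k$'s iff $\chi_i(1)=4\chi_i((1\,2\,3))-3\chi_i((1\,2)(3\,4))$. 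The identity column is already fully uncovered, so it remains to uncover the two columns of the classes $(1\,2\,3)(4)(5)\cdots$ and $(1\,2)(3\,4)(5)\cdots$. We do not yet know which columns these are, so this is where the earlier work pays off: having uncovered the full rows $\xi_0$ (the trivial row, constant $1$, identifiable as the all-ones row once found among the degree-$1$ rows) and $\xi_1$, and the full degree column, we can pin down the two needed columns. Indeed $\xi_1$ on the $3$-cycle class equals $n-3-1=n-4+ (\text{fixed pts})$... more cleanly: the class of a fixed $3$-cycle is the unique nonidentity class on which $\chi_{(n-1,1)}$ takes value $n-3$ and $\chi_{(2,1,\dots,1)}$ takes value $n-3$ as well (a $3$-cycle is an even permutation with $n-3$ fixed points), while the $(2,2)$-class is the unique class on which $\chi_{(n-1,1)}$ takes value $n-3$ but $\chi_{(2,1,\dots,1)}$ takes value $-(n-3)+\cdots$ — in any case these two values together with the sign read off from the already-uncovered $\xi_1,\xi_0$ rows isolate the two column positions without any further queries beyond the two full rows and the degree column already counted. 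Uncover those two columns ($2p_n$ entries), apply the criterion of Lemma~\ref{detect hook} row by row, and every row $i$ satisfying it is some $\xi_k$; its index $k$ is then recovered from $\chi_i(1)=\binom{n-1}{k}$, so $k$ is one of at most two values, which are separated by the sign $\chi_i$ carries on column $j_2$ (since $\xi_k$ and $\xi_{n-1-k}$ are sign-conjugate). Tallying: $\le 3p_n$ (finding the identity column) $+\,p_n$ (degree column) $+\,2p_n$ (rows $\xi_0,\xi_1$) $+\,2p_n$ (the $3$-cycle and $(2,2)$ columns) $\le 8p_n$; trimming the first phase to reuse the row already uncovered as a candidate $\xi_0$ or $\xi_1$ brings this to $7p_n$, plus we allow $n$ extra queries of slack for the handful of small $n$ where an explicit check is cleaner than the asymptotic argument, giving the stated $7p_n+n$.

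The main obstacle is the bookkeeping of phase one: making the column-identification steps genuinely query-free given only the rows and the degree column already uncovered, and in particular verifying that $n-3$ (the common $\xi_1$-value on the $3$-cycle class and on no other nonidentity class for $n$ large) together with the parity data really does isolate both the $(1\,2\,3)$-column and the $(1\,2)(3\,4)$-column. This requires the elementary estimate that for a nonidentity $g\in S_n$ with $\mathrm{fix}(g)\ne n-3$ one has $\chi_{(n-1,1)}(g)=\mathrm{fix}(g)-1\ne n-3$, which is immediate, plus handling the coincidence that a $(2,2)$-element and a $3$-cycle both have $n-3$... wait — a $(2,2)$-element has $n-4$ fixed points, so $\chi_{(n-1,1)}$ takes the value $n-5$ there, not $n-3$; hence the $3$-cycle column is actually the \emph{unique} nonidentity column with $\xi_1$-value $n-3$, and separately the $(2,2)$-column is the unique one with $\xi_1$-value $n-5$ and even parity (parity read from $\xi_0\equiv 1$ versus the sign character, which is the other all-$\pm1$ row). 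So the argument goes through cleanly, and the only real care needed is confirming there is no accidental clash of these $\xi_1$-values among classes of the same parity for every $n\ge 5$, which is a short finite check for $n\le 8$ and immediate for $n\ge 9$ since the relevant fixed-point counts are distinct.
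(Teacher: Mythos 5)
Your overall route is the same as the paper's: find the identity column with at most three uncovered rows via Lemma~\ref{unique max}, read off degrees and use Lemma~\ref{n-1} to locate the two degree-$(n-1)$ rows, use those rows to locate the $(1\,2\,3)$ and $(1\,2)(3\,4)$ columns, and then sweep the rows with the criterion of Lemma~\ref{detect hook}. The genuine gap is the count: as described, your plan uncovers \emph{both} degree-$(n-1)$ rows in full ($2p_n$) and both class columns in full ($2p_n$), on top of $3p_n$ for the identity search and $p_n$ for the identity column, i.e.\ roughly $8p_n+n$, and the claimed ``trimming'' to $7p_n$ by reusing a row from the first phase is unjustified --- there is no guarantee that any of the (at most three) rows uncovered while hunting for the identity column is $\xi_1$ or $\xi_{n-2}$, and $\xi_0$, being linear, cannot serve in that phase at all. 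The paper saves the extra $p_n$ by uncovering entries of only \emph{one} degree-$(n-1)$ row, stopping once it has seen one entry of absolute value $n-3$ (the transposition column), one of absolute value $n-4$ (the $3$-cycle column), and two of absolute value $n-5$ (the $(2,2)$ and $4$-cycle columns, separated by a single sign in the other degree-$(n-1)$ row), at a cost of at most $p_n-1$ entries; and it recovers the indices $k$ at the end not from degrees but from $\binom{n}{2}\chi_i((1\,2))/\chi_i(1)=\c_1$, which is strictly decreasing along the hooks $(n-k,1,\ldots,1)$, costing at most $n-2$ further entries. Without such savings your argument proves a bound of the form $8p_n+n$, not the stated $7p_n+n$.

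Two further points need repair even for the weaker bound. First, your final ``cleaned-up'' column identification is wrong: since $\xi_1(\mu)=|\{i:\mu_i=1\}|-1$, the unique nonidentity column with $\xi_1$-value $n-3$ is the \emph{transposition} column, while the $3$-cycle column is the unique one with value $n-4$ (and the two columns with value $n-5$ are the $(2,2)$ and $4$-cycle columns); as written, you would feed the wrong column into Lemma~\ref{detect hook}. Second, separating $\xi_k$ from $\xi_{n-1-k}$ ``by the sign on column $j_2$'' requires $j_2$ to be an odd class on which the relevant hook characters do not vanish --- an even class can never separate a character from its sign twist, so ``any fixed column with a distinctive $\xi_1$-value'' does not suffice. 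The transposition column does work, because $\chi_{(n-k,1,\ldots,1)}((1\,2))$ has the sign of $\binom{n-k}{2}-\binom{k+1}{2}$, which vanishes only when $k=n-1-k$; but this choice and this nonvanishing must be stated and verified, and the entries $\chi_i((1\,2))$ for the hook rows are additional queries that must be included in the tally.
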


\begin{proof}
  The object is to determine,
  among the $p_n$ rows of the character table $T$,
  the positions $i_j$ of the characters
  $\xi_j=\chi_{(n-j,1,1,\ldots,1)}\in\Irr(S_n)$ with $0\leq j\leq n-1$.
  Along the way it will be important to 
  determine, among the $p_n$ columns, the positions $a,b,c,d$ of
  the classes $1^{S_n},(1\, 2)^{S_n},(1\, 2\, 3)^{S_n},(1\,2)(3\,4)^{S_n}$.
  We may assume $n>6$ by inspection, since $p_n^2<7p_n+n$ for $n\leq 5$.
  
  \begin{enumerate}[Step 1.]
  \item Determine the column $a$ corresponding to the identity element.
    This can be done with $\leq 3p_n$ moves by first uncovering an
    entire row of entries for some non-linear irreducible character.
    This requires at most $3p_n$ moves because there are exactly
    two linear characters.
    By Lemma~\ref{unique max}, the row for any nonlinear character
    contains a unique
    largest entry in position $a$.

  \item Uncover the entries in column $a$ to reveal all of the
    character degrees $\chi(1)$,
    and then locate the pair of row indices $S=\{r<s\}=\{i_1,i_{n-2}\}$
    that correspond to
    the unique characters $\xi_1,\xi_{n-2}$ of degree $n-1$.
    There are exactly two such characters by Lemma~\ref{n-1}. 
    The cost of uncovering column $a$ is $\leq p_n-1$ queries.

  \item 
    Uncover new entries in row $r$ until there is one entry
    with absolute value $n-3$, one entry with absolute value $n-4$,
    and two entries with absolute value $n-5$.
    This is possible because, for any partition $\mu$ of $n$,
    \begin{equation}\label{xi_1}
      \xi_1(\mu)=(-1)^{n-\ell(\mu)}\xi_{n-2}(\mu)=|\{i: \mu_i=1\}|-1.
    \end{equation}
    Denote by $j_1,j_2,j_3,j_4$
    the positions of these entries in row $r$, with $j_1$ corresponding
    to the entry with absolute value $n-3$, and $j_2$ corresponding to
    the entry with absolute value $n-4$. 
    From these entries and \eqref{xi_1} we immediately determine
    $i_1,i_{n-1},b,c,d$. We have $b=j_1$, $c=j_2$,  
    \[
      i_1=\begin{cases}
        r & \text{if $\chi_r(g_{j_1})$ is positive,}\\
        s & \text{otherwise,}
      \end{cases}
    \]
    $i_{n-1}$ is the single element of $S\smallsetminus \{i_1\}$, and 
    \[
      d=\begin{cases}
        j_3 & \text{if $\chi_{i_{n-1}}(g_{j_3})$ is positive,}\\
        j_4 & \text{otherwise.}
      \end{cases}
    \]
    This step requires uncovering $\leq p_n-1$ new entries.
    
    \item Use Lemma~\ref{detect hook} to construct the set of desired positions
      $I=\{i_0,i_1,\ldots,i_{n-1}\}$ by
      starting with $I=\emptyset$ and 
      successively going through the rows $i=1,2,\dots$ and including
      an index $i$ in $I$ if $\chi_i(g_a)=4\chi_i(g_c)-3\chi_i(g_d)$,
      until $|I|=n$.
      This requires uncovering at most $2p_n-2$ new entries.

    \item Let $f(i)=\frac{\binom{n}{2}\chi_i(g_b)}{\chi_i(g_a)}$, so if $\chi_i=\chi_\lambda$, then $f(i)=\c_1(\lambda)$ by \eqref{content sum}. 
      We then obtain the desired sequence $i_0,i_1,\ldots,i_{n-1}$ from
      the set $I=\{i_0,i_1,\ldots,i_{n-1}\}$
      by using the fact that $f(i_0)>f(i_1)>\ldots>f(i_{n-1})$.
      Computing these $f(i_j)$'s requires at most $n-2$ additional
      entries $\chi_{i_j}(g_b)$ be uncovered.\qedhere
    \end{enumerate}
  \end{proof}

\begin{theorem}\label{uncovering numbers}
  If $n\neq 4,6$ and the entries of the $S_n$
  character table $[\chi_i(g_j)]_{1\leq i,j\leq p_n}$ are covered up, 
  then
  \begin{enumerate}[1)]
  \item the classes $g_j^{S_n}$ can be determined
    by uncovering at most $\lfloor \frac{n}{2}\rfloor p_n+7p_n+n$ entries,
  \item the characters $\chi_i$ can be identified
    by uncovering at most $np_n$ additional entries.
  \end{enumerate}
\end{theorem}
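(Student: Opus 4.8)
The plan is to chain together the results already in place. For part~1), I would first apply Proposition~\ref{discover hooks}: since $n\neq 4,6$, it locates, using at most $7p_n+n$ uncovered entries, the row indices $i_0,i_1,\ldots,i_{n-1}$ of the hook characters $\xi_{n,k}=\chi_{(n-k,1,1,\ldots,1)}$, $0\leq k\leq n-1$ (these are the characters $\xi_k$ of Proposition~\ref{discover hooks}). Then, for each of the $p_n$ columns $j$, I would uncover the $\lfloor n/2\rfloor$ entries lying in rows $i_{n-1},i_{n-2},\ldots,i_{n-\lfloor n/2\rfloor}$, that is, the values $\xi_{n,n-1}(g_j),\xi_{n,n-2}(g_j),\ldots,\xi_{n,n-\lfloor n/2\rfloor}(g_j)$. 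By Lemma~\ref{class id lemma} these determine the cycle type of $g_j^{S_n}$, hence the class itself, and the running total is at most $7p_n+n+\lfloor n/2\rfloor p_n$, the asserted bound.

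For part~2), the key point is that part~1) has revealed, for every partition $\mu$ of $n$, which column of $T$ is indexed by the class of cycle type $\mu$; this is exactly the column labelling needed to run the character-identification algorithm of \S\ref{char param}. So for each row $i=1,\ldots,p_n$ I would carry out Definitions~\ref{hd def}, \ref{cx def} and~\ref{shape definition} for $\chi_i$, uncovering only the entries $\chi_i(g)$ those definitions call for; by Proposition~\ref{query prop} at most $n$ entries in row $i$ are needed, hence at most $np_n$ additional entries in all. Since each $\chi_i$ is irreducible, Theorem~\ref{id theorem} guarantees that the resulting symbol $(\ax(\chi_i)\mid\bx(\chi_i))$ is the Frobenius symbol of the partition $\lambda$ with $\chi_i=\chi_\lambda$, so $\chi_i$ is identified.

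I do not expect a genuine obstacle here; the substance is in checking that the two halves dovetail, in that the output of part~1) --- cycle-type labels for all $p_n$ columns --- is precisely the input the \S\ref{char param} machinery needs in part~2), so no queries are spent relabelling columns. It remains only to note that every row of $T$ is a genuine irreducible character, so Theorem~\ref{id theorem} applies to each of them; that the indices $i_k$ furnished by Proposition~\ref{discover hooks} label exactly the characters $\xi_{n,k}$ appearing in Lemma~\ref{class id lemma}; that the word ``additional'' in~2), together with the three ``at most'' bounds, absorbs any overlap among the entries uncovered in the hook-finding, class-identification, and character-identification phases, so no disjointness argument is needed; and that the only values genuinely set aside along the way, $n=4,6$, are exactly those excluded by hypothesis.
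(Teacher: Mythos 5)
Your proposal is correct and follows essentially the same route as the paper: part 1) combines Proposition~\ref{discover hooks} with Lemma~\ref{class id lemma} (using exactly the values $\xi_{n,n-1}(g_j),\ldots,\xi_{n,n-\lfloor n/2\rfloor}(g_j)$ per column), and part 2) runs the \S\ref{char param} algorithm row by row, invoking Theorem~\ref{id theorem} and the query count of Proposition~\ref{query prop}. You merely spell out the bookkeeping that the paper leaves implicit.
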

  \begin{proof}
    The first part follows from Lemma~\ref{class id lemma} and Proposition~\ref{discover hooks}.
    The second part follows from the first part, since once the classes $g_j^{S_n}$ are known, each character $\chi_i$ can be identified by uncovering at most $n$ additional entries by Theorem~\ref{id theorem} and the query calculation in Proposition~\ref{query prop}. 
  \end{proof}

  \begin{theorem}
    If $n\neq 4,6$ and the entries of the $S_n$ character table
    $[\chi_i(g_j)]_{1\leq i,j\leq p_n}$
    are covered up,
    and if $u(n)$ denotes the
    fraction of
    the character table that 
    needs to be uncovered
    in order to
    identify all of the indexing characters $\chi_i$ and classes $g_j^{S_n}$,  then
    $u(n)=O(n/p_n)$ has exponential decay as $n\to\infty$.
  \end{theorem}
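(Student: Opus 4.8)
The plan is to obtain the result as an immediate corollary of Theorem~\ref{uncovering numbers}. First I would add the two counts appearing there: pinning down all classes $g_j^{S_n}$ costs at most $\lfloor\tfrac n2\rfloor p_n+7p_n+n$ uncovered entries by part 1), and then identifying all characters $\chi_i$ costs at most $np_n$ further entries by part 2), so the total number of entries one needs to uncover is at most
\[
  \bigl(\lfloor\tfrac n2\rfloor+7\bigr)p_n+n+np_n\;\le\;\tfrac{3n}{2}\,p_n+7p_n+n,
\]
which is $O(np_n)$. Dividing by the number $p_n^2$ of entries in the table gives
\[
  u(n)\;\le\;\frac{\tfrac{3n}{2}p_n+7p_n+n}{p_n^2}\;=\;O\!\left(\frac{n}{p_n}\right).
\]
For all large $n$ the numerator is genuinely smaller than $p_n^2$, so $u(n)$ is a bona fide fraction; the excluded values $n=4,6$, and any small $n$ for which the crude bound exceeds $1$, do not matter for the asymptotics.

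Next I would convert the estimate $u(n)=O(n/p_n)$ into a decay rate using the asymptotic $p_n\sim\frac{1}{4n\sqrt3}\exp\!\bigl(\frac{2\pi}{\sqrt6}\sqrt n\bigr)$ recalled in \S2. This yields
\[
  u(n)=O\!\left(\frac{n}{p_n}\right)=O\!\left(n^2\exp\!\Bigl(-\tfrac{2\pi}{\sqrt6}\sqrt n\Bigr)\right),
\]
so that $\log\bigl(1/u(n)\bigr)\gg\sqrt n$; equivalently $u(n)\le e^{-c\sqrt n}$ for some fixed $c>0$ and all sufficiently large $n$, which is the asserted exponential decay.

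I do not expect a genuine obstacle: the statement is a packaging of Theorem~\ref{uncovering numbers} together with the Hardy--Ramanujan growth of $p_n$. The only points requiring a little care are the bookkeeping when combining the two additive bounds (in particular, noting that the $np_n$ term from part 2 dominates the $\lfloor n/2\rfloor p_n$ term from part 1, so the total query count is $\Theta(np_n)$ and not larger), and being precise about the phrase ``exponential decay'' --- here meaning decay at rate $\exp(-\Theta(\sqrt n))$, which in particular is faster than $n^{-k}$ for every fixed $k$.
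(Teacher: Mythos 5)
Your proposal is correct and is essentially the paper's own argument: the paper proves this theorem simply by citing Theorem~\ref{uncovering numbers}, with the division by $p_n^2$ and the Hardy--Ramanujan asymptotic for $p_n$ from \S2 left implicit, which is exactly the bookkeeping you carry out explicitly.
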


  \begin{proof}
    By Theorem~\ref{uncovering numbers}.
  \end{proof}

  \begin{remark}
  There are two alternative approaches that one might try for 
  discovering the positions of the hook-shaped characters in Proposition~\ref{discover hooks},
  but both of these approaches come with difficulties.
  It is natural to try using either the column indexed by $1^{S_n}$ or the column indexed by $(1\, 2\, \ldots\, n)^{S_n}$.
  The trouble with the identity column is that in general there can be many characters with the same degree. In fact, for any positive integer $k$, and for sufficiently large $n$, there exist $k$ irreducible characters of $S_n$ with the same degree. (Interestingly, there seems to be an unexplored dual version of this result where character degrees are replaced by class sizes.)
  However, it seems to be the case that the set of hook-shaped characters can indeed be identified using degrees. At least experimentally, with the exception of $n=6,12,15,24,35$, it seems to be the case that the 
  characters $\chi_{(n-k,1,1,\ldots,1)}\in\Irr(S_n)$, $0\leq k\leq n-1$, are
  the only irreducible characters $\chi$ of $S_n$ such that 
  $\chi(1)\in\{\binom{n-1}{k}: 0\leq k\leq n-1\}$. It would be interesting to know if there are only a finite number of exceptional cases. Regarding the column
  for $(1\, 2\, \ldots\, n)^{S_n}$, the problem here is that there are multiple so-called ``sign partitions,'' i.e.\ partitions $\mu$ such that $\chi_\lambda(\mu)\in\{0,1,-1\}$ for all $\chi_\lambda\in\Irr(S_n)$ \cite{Morotti}. Nevertheless, with the exception of only a few special cases for small $n$, the column $(1\, 2\, \ldots\, n)^{S_n}$ can be identified by additionally considering the number of nonzero entries. I will omit the proof of the following lemma. 
    \begin{lemma}
    Let $\mu$ be a partition of $n$. Then the following are equivalent.
    \begin{enumerate}[(i)]
    \item $\chi_\lambda(\mu)\in\{0,1,-1\}$ for all
      $\lambda$ and $\sum_{\nu}|\chi_\nu(\mu)|^2=n$.
    \item $\mu=(n)$, $\mu=(3,2,1)$, $\mu=(2,1,1)$, or $\mu=(1,1)$.
    \end{enumerate}
  \end{lemma}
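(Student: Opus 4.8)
The plan is to reduce the implication (i)$\Rightarrow$(ii) to an elementary number-theoretic classification, and to dispatch (ii)$\Rightarrow$(i) by direct verification. The key translation is column orthogonality: writing $m_i=m_i(\mu)$ for the number of parts of $\mu$ equal to $i$, one has $\sum_\nu|\chi_\nu(\mu)|^2=z_\mu$ with $z_\mu=\prod_{i\ge1}i^{m_i}m_i!$, the order of the centralizer in $S_n$ of an element of cycle type $\mu$. So the second condition in (i) says exactly that $z_\mu=n$; equivalently, the $S_n$-class of cycle type $\mu$ has size $n!/z_\mu=(n-1)!$.

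So first I would prove that the only partitions $\mu$ of $n$ with $z_\mu=n$ are those listed in (ii); this already gives (i)$\Rightarrow$(ii), and in fact shows the sign-partition clause of (i) is redundant for that direction. Since $n=\sum_i i m_i$, the equation is $\prod_i i^{m_i}m_i!=\sum_i i m_i$, and I would split on the number $t=\sum_{i\ge2}m_i$ of parts that are $\ge2$. When $t=0$ we have $\mu=(1^n)$ and $z_\mu=n!$, so $n!=n$ forces $n\le2$, i.e.\ $\mu=(1)=(n)$ or $\mu=(1,1)$. When $t=1$, say $\mu=(a,1^m)$ with $a\ge2$, the equation becomes $a\cdot m!=a+m$, whose only solutions are $m=0$ (giving $\mu=(n)$) and $a=m=2$ (giving $\mu=(2,1,1)$). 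When $t\ge2$, let $P$ and $S$ denote the product and the sum of the parts that are $\ge2$, so $z_\mu=P\cdot m!\cdot\prod_{i\ge2}m_i!$ and $n=S+m$; an easy induction gives $P\ge S$, with $P=S$ only for parts $2,2$ and $P=S+1$ only for parts $3,2$. Feeding these equality cases into the equation, together with the crude estimate $z_\mu\ge P\cdot m!\ge(S+2)\,m!>S+m$ that rules out $P\ge S+2$, leaves only $\mu=(3,2,1)$.

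For (ii)$\Rightarrow$(i), the identity $z_\mu=n$ holds in each of the four cases by the computation just described, so only the sign-partition clause needs checking. For $\mu=(n)$ this is classical: by the Murnaghan--Nakayama rule the only $\chi_\lambda$ not vanishing on an $n$-cycle are the $n$ hooks $\lambda=(n-k,1^k)$, with $\chi_{(n-k,1^k)}((n))=(-1)^k$. For $\mu=(1,1)$, $\mu=(2,1,1)$, and $\mu=(3,2,1)$ it is a finite inspection of the character tables of $S_2$, $S_4$, and $S_6$ respectively, the last being the well-known exceptional sign column of $S_6$ (also checkable directly via Murnaghan--Nakayama, or citable from \cite{Morotti}).

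The only step with any real content is the case $t\ge2$ of the classification: one has to pin down the equality cases of the product-versus-sum inequality for the parts $\ge2$ and then rule out all partitions with many appended $1$'s, which is a short but slightly fiddly case analysis. Confirming that $(3,2,1)$ is a sign partition for $S_6$ is likewise a concrete finite computation rather than a source of difficulty.
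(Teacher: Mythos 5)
Your proof is correct, and in fact the paper gives no proof at all of this lemma --- the author explicitly writes ``I will omit the proof of the following lemma'' --- so there is no argument of the paper to compare yours against. Your route is the natural one: column orthogonality converts the second clause of (i) into $z_\mu=n$, where $z_\mu=\prod_i i^{m_i}m_i!$ is the centralizer order, and the Diophantine classification of $z_\mu=n$ (split by the number $t$ of parts $\geq 2$, with the product-versus-sum inequality $P\geq S$ for the parts $\geq 2$ and its equality cases $\{2,2\}$ and $P=S+1$ for $\{3,2\}$) correctly yields exactly $(n)$, $(2,1,1)$, $(1,1)$, $(3,2,1)$; I checked the boundary cases ($8\,m!=4+m$ has no solution, $6\,m!=5+m$ forces $m=1$, $a\,m!=a+m$ forces $m=0$ or $a=m=2$, and $(S+2)m!>S+m$ disposes of $P\geq S+2$), and they all go through. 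The converse direction is, as you say, a finite verification: $(n)$ is a sign class by Murnaghan--Nakayama, and $(1,1)$, $(2,1,1)$, $(3,2,1)$ are checked in $S_2$, $S_4$, $S_6$. Your observation that the sign-partition clause of (i) is redundant for the implication (i)$\Rightarrow$(ii) is a nice bonus that the bare statement of the lemma does not reveal.
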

  \noindent
  So the problem of quickly locating the column indexed by
  $(1\, 2\, \ldots\, n)^{S_n}$ reduces to quickly locating a column with
  the right number of $\pm 1$'s and then $0$'s elsewhere.
  But the fraction of
  the character table covered by $0$'s and $\pm 1$'s is not
  yet understood. The result of \cite{PeluseSoundararajan} shows that
  the fraction of the table covered by $\pm 1$'s tends to $0$ as $n\to\infty$.
  The limiting behavior for the fraction of the table covered by $0$'s is not known, see \cite{MillerMathZ,MillerJCTA,MillerScheinerman}.
\end{remark}

\end{document}